\newtheorem {theorem}{Theorem}[section]
\newtheorem {proposition}{Proposition}[section]
\newenvironment{proof}[1][Proof]{\textbf{#1.} }{\
\rule{0.5em}{0.5em}}
\begin{document}
%
%
%
\begin{center}
{\LARGE Central limit theorem for the variable bandwidth kernel density estimators}

\bigskip


\bigskip Janet Nakarmi$^{a}$ and Hailin Sang$^{b}$$^{1}$\footnotetext[1]{Corresponding author}

\bigskip$^{a}$ Department of Mathematics, University of Central Arkansas, Conway, AR 72035, USA. E-mail address: janetn@uca.edu

\bigskip$^{b}$ Department of Mathematics, The University of Mississippi,
University, MS 38677, USA. E-mail address: sang@olemiss.edu
\end{center}

\begin{center}
\bigskip\textbf{Abstract}
\end{center}

In this paper we study the ideal variable bandwidth kernel density estimator introduced by McKay \cite{McKay a, McKay b} and Jones et al. \cite{JonesMcKayHu}  and the plug-in practical version of the variable bandwidth kernel estimator with two sequences of bandwidths as in Gin\a'{e} and Sang \cite{GineSang1}.  Based on the bias and variance analysis of the ideal and true variable bandwidth kernel density estimators, we study the central limit theorems for each of them.  \\

\noindent  {\textit{MSC 2010 subject classification}: 62G07,  62E20, 62H12, 60F05}\\

\noindent Key words and phrases: central limit theorem, variable bandwidth kernel density estimation.

\section{Introduction}

Suppose that $X_i,i \in \mathbb{N}$, are independent identically distributed (i.i.d.) observations with density function $f(t)$,  $t \in \mathbb{R}^d$. Let $K$ to be a symmetric probability kernel satisfying some differentiability properties. The classical kernel density estimator 
\begin{equation}\label{classical}
\hat{f}(t;h_{n}) = \frac{1}{nh_{n}^d}\sum_{i=1}^n K\left(\frac{t-X_i}{h_{n}}\right), 
\end{equation}
where $h_n$ is the bandwidth sequence with $h_n\rightarrow 0, nh_n^d\rightarrow \infty$, and its properties have been well studied in the literature. The variance of (\ref{classical}) has order $O((nh_n^d)^{-1})$ and the bias has order $O(h_n^{2})$ if $f(t)$ has bounded second order partial derivatives. See Silverman \cite{Silverman} and Wand and Jones \cite{WJ} for the literature on kernel density estimation. For $k=(k_1,\dots,k_d)\in (\mathbb N\cup\{0\})^d$,  set $|k|=\sum_{i=1}^dk_i$, one may obtain bias with order $O(h_n^{4})$ for the estimator (\ref{classical}) if the fourth order kernel function $K(x)$ is allowed: $\int_{\mathbb{R}^d} K(x)dx=1$ and $\int_{\mathbb{R}^d} x_1^{k_1}\cdots x_d^{k_d} K(x)dx=0$ for $|k|=1,2,3$. Nevertheless,  $\hat{f}(t;h_{n})$ in  (\ref{classical}) may take negative values and therefore not a true density function in this case since $K(x)$ may take negative values. For example, see Marron \cite{Marron}. In this paper we study the following multidimensional version of the variable bandwidth kernel density estimator proposed by McKay \cite{McKay a, McKay b}:
 \begin{equation}\label{ideal}
 \bar{f}(t;h_n)=\frac{1}{nh^d_n} \sum_{i=1}^{n}\alpha^d(f(X_i))K(h_n^{-1}\alpha(f(X_i))(t-X_i)),
 \end{equation}
where $\alpha(s)$ is a smooth function of the form
\begin{equation}\label{alp}
\alpha(s):=cp^{1/2}(s/c^2).
\end{equation}
The function $p$ has at least fourth order derivative and satisfies $p(x)\geq 1$ for all $x$ and $p(x)=x$ for all $x\geq t_0$ for some $1 \le t_0<\infty$, and a fixed number $c$, where $0<c<\infty$. The equation (\ref{ideal}) is a variable bandwidth kernel density estimator since the bandwidth has form $h_n/\alpha(f(X_i))$ if we rewrite (\ref{ideal}) in the form of  the classical one, (\ref{classical}). 

The study of variable bandwidth kernel density estimation goes back to Abramson \cite{Abramson}. Abramson proposed the following estimator 
\begin{equation}\label{abramson}
f_A(t;h_n)=\frac{1}{nh^d_n}\sum_{i=1}^n \gamma^d(t, X_i)K(h_n^{-1}\gamma (t, X_i)(t-X_i)),
\end{equation}
where $\gamma (t,s)=(f(s)\vee f(t)/10)^{1/2}$.  The bandwidth $ h_n/\gamma (t, X_i)$ at each observation $X_i$ is inversely proportional to $f^{1/2}(X_i)$ if $f(X_i)\ge f(t)/10$.  Notice that (\ref{ideal}) also has the square root law since $\alpha(f(X_i))=f^{1/2}(X_i)$ if $f(X_i)\ge t_0c^2$ by the definition of the function $p(x)$. The estimator (\ref{ideal}) or (\ref{abramson}) has clipping procedure in (\ref{alp}) or $\gamma (t,s)$ since they make the true bandwidth $ h_n/\alpha(f(X_i))\ge h_n/c$ or $ h_n/\gamma (t, X_i)\ge 10^{1/2} h_n/f(t)^{1/2}$. The clipping procedures prevent too much contribution to the density estimation at $t$ if the observation $X_i$ is too far away from $t$. Abramson showed that this square root law and the clipping procedure improve the bias from the order of $h_n^2$ to the order of $h_n^4$ for the estimator (\ref{abramson}) while at the same time keep the variance at the order of $(nh_n^d)^{-1}$ if $f(t)\ne 0$ and $f(x)$ has fourth order continuous derivatives at $t$. So, one has a {\it non-negative} estimator of the density that performs asymptotically as a kernel estimator based on a fourth order (hence, partly negative) kernel. However,  this variable bandwidth estimator (\ref{abramson}) is not a density function of a true probability measure since the integral of $f_A(t;h_n)$ over $t$ is not  $1$.  

Terrell and Scott \cite{TerrellScott} and  McKay \cite{McKay b} showed that the following modification of the Abramson estimator without the `clipping filter' $(f(t)/10)^{1/2}$ on $f^{1/2}(X_i)$ studied in Hall and Marron \cite{HallMarron},
\begin{equation}\label{HM}
f_{HM}(t;h_n)=\frac{1}{nh^d_n}\sum_{i=1}^n f^{d/2}(X_i)K(h_n^{-1}f^{1/2}(X_i)(t-X_i)),
\end{equation}
which has integral $1$ and thus is a true probability density, may have bias of order much larger than $h_n^4$.  Therefore, the clipping is necessary for such bias reduction.   In the case $d=1$, Hall, Hu and Marron \cite{HallHuMarron} proposed the estimator
\begin{equation}\label{HHM}
f_{HHM}(t;h_n)=\frac{1}{n h_n}\sum_{i=1}^{n}K\left(\frac{t-X_i}{h_n} f^{1/2}(X_i)\right) f^{1/2}(X_i)I(|t-X_{i}|<h_nB)
\end{equation}
where $B$ is a fixed constant; see also Novak \cite{Novak} for a similar estimator.
This estimator is non-negative and achieves the desired bias reduction but, like Abramson's, it does not integrate to 1.

In conclusion, it seems that the estimator (\ref{ideal}) has all the advantages: it is a true density function with square root law and smooth clipping procedure. However, notice that this estimator and all the other variable bandwidth kernel density estimators are not applicable in practice since they all include the studied density function $f$. Therefore, we call them ideal estimators in the literature.  Hall and Marron \cite{HallMarron} studied a true density estimator 
\begin{equation*} 
\hat{f}_{HM}(t; h_{1,n}, h_{2,n})=\frac{1}{nh_{2,n}^d}\sum_{i=1}^{n}K\left(\frac{t-X_i}{h_{2,n}}\hat{f}^{1/2}(X_i;h_{1,n})\right)\hat{f}^{d/2}(X_i;h_{1,n}),
\end{equation*}
by plugging in a pilot estimator, the classical estimator (\ref{classical}), into the estimator (\ref{HM}). 
Here the bandwidth sequence $h_{2,n}$ is the $h_n$ as in (\ref{HM}) and the bandwidth sequence $h_{1,n}$ is applied in the classical kernel density estimator (\ref{classical}), i.e., 
\begin{equation*}
\hat{f}(t;h_{1,n}) = \frac{1}{nh_{1,n}^d}\sum_{i=1}^n K\left(\frac{t-X_i}{h_{1,n}}\right).
\end{equation*}
They took the Taylor expansion of $K\left(\frac{t-X_i}{h_{2,n}}\hat{f}^{1/2}(X_i;h_{1,n})\right)$ at $K\left(\frac{t-X_i}{h_{2,n}}f^{1/2}(X_i)\right)$ and then proved that the discrepancy between the true estimator $\hat{f}_{HM}(t; h_{1,n}, h_{2,n})$ and the ideal version (\ref{HM}) has asymptotic convergence rate $O_P(n^{-4/(8+d)})$ pointwise. 
By applying this Taylor decomposition, McKay \cite{McKay b} studied convergence of plug-in true estimator of (\ref{ideal}) in probability and pointwise. Gin\'{e} and Sang \cite{GineSang, GineSang1} studied plug-in true estimators of (\ref{HHM}) and (\ref{ideal}) for one and d-dimensional observations. They proved that the discrepancy between the true estimator and the true value converges uniformly over a data adaptive region at a rate of $O_{a.s.}((\log n/n)^{4/(8+d)})$ by applying empirical process techniques.  The true estimator in  Gin\'{e} and Sang \cite{GineSang1} has the following form
\begin{equation}\label{true}
\hat{f}(t; h_{1,n}, h_{2,n})=\frac{1}{nh_{2,n}^d}\sum_{i=1}^{n}K\left(\frac{t-X_i}{h_{2,n}}\alpha(\hat{f}(X_i;h_{1,n}))\right)\alpha^d(\hat{f}(X_i;h_{1,n})).
\end{equation}
In this paper, we concentrate on the study of central limit theorem of the true estimator (\ref{true}). 

The paper has the following structure. Section \ref{preliminary} introduces the decompositions which will be applied throughout the paper. Section \ref{BIAS} gives the exact bias formula. In Section \ref{variance}, we obtain an exact formula for the variance of the ideal estimator. Based on the study in Sections \ref{BIAS} and \ref{variance}, we provide a central limit theorem for the true estimator in Section \ref{central}.  The simulation study in Section \ref{sim} demonstrates the advantage of the variable bandwidth kernel estimation. 
\section{Preliminary decomposition}\label{preliminary}
For convenience, we adopt the notations as in Gin\'{e} and Sang \cite{GineSang1} for the Taylor series expansion of $K\left(\frac{t-X_i}{h_{2,n}}\alpha(\hat f(X_i;h_{1,n}))\right)$ at $K\left(\frac{t-X_i}{h_{2,n}}\alpha(f(X_i))\right)$.  We also give statements without detailed explanation. For details, readers are referred to Gin\'{e} and Sang \cite{GineSang1}. ${\cal P}_{C,k}$ will denote the set of densities on $\mathbb R^d$ for which they and their partial derivatives of  order $k$ or lower are bounded by $C<\infty$ and are uniformly continuous. We say that a function $g$ is in $C^l(\Omega)$ if it and its first $l$ derivatives are bounded and uniformly continuous on $\Omega$.
 
 Define
$\delta(t)=\delta(t,n)$ by the equation
\begin{equation*} 
\delta(t)=\frac{\alpha(\hat f(t;h_{1,n}))-\alpha (f(t))}{\alpha(f(t))}.
\end{equation*}
Then, 
\begin{equation}\label{alphahat}
\alpha(\hat f(t;h_{1,n}))=\alpha(f(t))(1+\delta(t))
\end{equation}
and
\begin{equation}\label{deltabd}
|\delta(t)|\le B c^{-2}|\hat f(t;h_{1,n})-f(t)|
\end{equation}
for a constant $B$ that depends only on the function $p$. Here the constant $c$ and the function $p$ are applied in the definition of $\alpha(\cdot)$ in (\ref{alp}).   
Although we study the asymptotics of the true estimator pointwise, the uniform asymptotic behavior of the quantity $\delta(\cdot)$ is needed in the latter analysis. Define
$$D(t;h_{1,n})=\hat f(t;h_{1,n})-\mathbb{E}\hat f(t;h_{1,n})\ \ {\rm and}\ \ b(t;h_{1,n})=\mathbb{E}\hat f(t;h_{1,n})-f(t).$$
Note that for $f\in{\cal P}_{C,2}$,
$\sup_{t\in \mathbb{R}^d} |b(t;h_{1,n})|=O(h_{1,n}^2)$, 
and by Gin\'e and Guillou \cite{GineGuillou2}, 
\begin{equation*} 
\sup_{t\in \mathbb{R}^d}|D(t;h_{1,n})|=O_{a.s.}\left(\sqrt{\frac{\log h_{1,n}^{-1}}{nh_{1,n}^d}}\right)
\end{equation*}
 for $f\in{\cal P}_{C,0}$.
Denote 
\begin{equation}\label{u}
\sqrt{\frac{\log h_{1,n}^{-1}}{nh_{1,n}^d}}+h_{1,n}^2:=U(h_{1,n}).
\end{equation}
Then we have, 
\begin{equation*} 
\sup_{t\in \mathbb{R}^d}|\hat f(t;h_{1,n})-f(t)|=\sup_{t\in \mathbb{R}^d}|D(t;h_{1,n})+b(t;h_{1,n})|=O_{a.s.}\left(U(h_{1,n})\right)
\end{equation*}
and 
\begin{equation}\label{zero}
\sup_{t\in \mathbb{R}^d}|\delta(t)|=O_{a.s.}\left(U(h_{1,n})\right)
\end{equation}
for $f\in{\cal P}_{C,2}$. By the definition of $\delta(t)$, we also have, 
\begin{eqnarray}\label{deltatalor}
\delta(t)=\frac{\alpha'(f(t))[\hat f(t;h_{1,n})-f(t)]}{\alpha(f(t))}
+\frac{\alpha''(\eta)[\hat f(t;h_{1,n})-f(t)]^2}{2\alpha(f(t))}
\end{eqnarray}
where $\eta=\eta(t,h_{1,n})\ge 0$ is between $\hat f(t;h_{1,n})$ and $f(t)$. Notice that $|\alpha''(\eta( t,h_{1,n}))|\le c^{-3}A$ for some constant $A$ which depends only on the clipping function $p$.
It is also convenient to record the following expansion of $\alpha^d(\hat f)$ implied by (\ref{alphahat}) and (\ref{zero}):
\begin{equation} \label{delta1}
\alpha^d(\hat f(t;h_{1,n}))=\alpha^d(f(t))(1+d\delta(t))+\delta_1(t)
\end{equation}
with\begin{equation*} 
\|\delta_1\|_\infty=O_{\rm a.s.}(\|\delta\|_\infty^2)\ \ {\rm for\ }\ f\in{\cal P}_{C,2}.
\end{equation*}
Hence, by (\ref{deltabd}) and (\ref{zero}), 
\begin{equation*} 
\|\delta_1\|_\infty=O_{\rm a.s.}(\|\hat f_n(\cdot;h_{1,n})-f(\cdot)\|_\infty^2)\ \ {\rm for\ }\  f\in{\cal P}_{C,2}.
\end{equation*}
Set 
\begin{equation} \label{L1Lfunctions}
L_1(t)=\sum_{i=1}^dt_iK'_i(t)\ \ {\rm and}\ \  L(t)=dK(t)+L_1(t),\ \ t\in{\mathbb R}^d,
\end{equation}
where $K'_i$ denotes the partial derivative of $K$ in the direction of the $i$-th coordinate, and $t_i$ denotes the $i$-th coordinate of $t\in\mathbb R^d$.  By symmetry and integration by parts, we notice that $L$ is a second order kernel.

We then have the following Taylor series expansion
\begin{eqnarray} \label{expK}
&&K\left(\frac{t-X_i}{h_{2,n}}\alpha(\hat f(X_i;h_{1,n}))\right)=K\left(\frac{t-X_i}{h_{2,n}}\alpha(f(X_i))\right)
\notag\\
&&\!\!+\sum_{j=1}^dK'_j\left(\frac{t-X_i}{h_{2,n}}\alpha(f(X_i))\right)\frac{(t-X_i)_j}{h_{2,n}}\alpha(f(X_i))\delta(X_i)+\delta_2(t;X_i),
\end{eqnarray}
where
\begin{equation*} 
\delta_2(t,X_i)=\sum_{j,\ell=1}^dK''_{j,\ell}(\xi)\frac{(t-X_i)_j(t-X_i)_\ell}{2h_{2,n}^2}\alpha^2(f(X_i))\delta^2(X_i),
\end{equation*}
$\xi$  being a (random) number between $\frac{t-X_i}{h_{2,n}}\alpha(f(X_i))$ and
$\frac{t-X_i}{h_{2,n}}\alpha(f(X_i))+
\frac{t-X_i}{h_{2,n}}\alpha(f(X_i))\delta (X_i)$.  By the analysis in Gin\'{e} and Sang \cite{GineSang1}, 
\begin{equation}\label{delta2bd'}
\sup_{t,x\in\mathbb R^d}|\delta_2(t,x)|=O_{\rm a.s.}\left(\|\hat f(\cdot;h_{1,n})-f(\cdot)\|_\infty^2\right)=O_{a.s.}(U^2(h_{1,n}))
\end{equation}
if $f\in{\cal P}_{C,2}$.
Therefore using equation $(\ref{L1Lfunctions})$, Taylor series expansion of $K$ in $(\ref{expK})$, and expansion of $\alpha^d$ in $(\ref{delta1})$, we have
\begin{eqnarray}
&&\hat f(t;h_{1,n},h_{2,n})=\bar f(t;h_{2,n})\notag\\
&&\!\!+\frac{1}{nh_{2,n}^d}\sum_{i=1}^{n}L\left(\frac{t-X_i}{h_{2,n}}\alpha(f(X_i))\right)
\alpha^d(f(X_i))\delta (X_i)\label{e'}\\
&&\!\!+\frac{1}{nh_{2,n}^d}\sum_{i=1}^{n}\Bigg[K\left(\frac{t-X_i}{h_{2,n}}\alpha(f(X_i))\right)\delta_1(X_i)+\alpha^d(f(X_i))\delta_2(t,X_i)\notag\\
&&~~~~~~~~~~~~~~~~~~~~~+dL_1\left(\frac{t-X_i}{h_{2,n}}\alpha(f(X_i))\right)\alpha^d(f(X_i))\delta^2(X_i)\Bigg]\label{e1}\\
&&\!\!+\frac{1}{nh_{2,n}^d}\sum_{i=1}^{n}\left[L_1\left(\frac{t-X_i}{h_{2,n}}\alpha(f(X_i))\right)\delta(X_i)\delta_1(X_i)+d\alpha^d(f(X_i))\delta(X_i)\delta_2(t,X_i)\right]\notag\\
&&\!\!\label{e2}\\
&&\!\!+\frac{1}{nh_{2,n}^d}\sum_{i=1}^n \delta_2(t,X_i)\delta_1(X_i).\label{e3}
\end{eqnarray}

\section{Bias}\label{BIAS}
The following notations are necessary for the rest of the paper: for $v=(v_1,\dots,v_d)\in (\mathbb N\cup\{0\})^d$ and vector $u=(u_1,\dots, u_d)^T$,  set 
\begin{align}\label{notation}
&|v|=\sum_{i=1}^dv_i, \;\; v!=v_1!\cdots v_d!, \notag\\
&D_v=D_{u_1}^{v_1}\circ\dots\circ D_{u_d}^{v_d},\;\; u^v=u_1^{v_1}\cdots u_d^{v_d},\\
&\tau_v=\int_{\mathbb R^d} u^vK(u)du,\;\;  \mu_v=\int_{\mathbb R^d} u^vK^2(u)du,\notag
\end{align}
where $D_{v}$ means that we take  $v_1$ partial order derivatives on the first coordinate,  $v_2$ partial order derivatives on the second coordinate, until we take  $v_d$ partial order derivatives on the $d$-th coordinate.

We also define
\begin{equation*}
{\cal D}_r:=\{t\in\mathbb R^d: f(t)>r>t_0 c^2, \|t\|<1/r\},\ \ r>0.
\end{equation*}
Here, $c$ and $t_0$ are the constants that appear in the definition of the clipping function $\alpha$ in (\ref{alp}).
\begin{proposition} \label{bias12}
Let $f$ be a  density function in ${\cal P}_{C,4}$, let $p$ be a  clipping function in $C^5(\mathbb R)$,  set $\alpha(f(t))=cp^{1/2}(c^{-2}f(t))$ for some $c>0$, and define $\hat f(t;h_{1,n},  h_{2,n})$ as in (\ref{true}). Suppose that the kernel $K$ on $\mathbb{R}^d$ has the form $K(t)=\Phi(\|t\|^2)$  for some real function $\Phi$  with uniformly bounded second order derivative and with support contained in $[0,T]$, $T<\infty$. $K$ is non-negative and  integrates to 1. For the quantity $U(h_{1,n})$ defined in (\ref{u}), assume that  $U(h_{1,n})=o(h_{2,n}^2)$. Then as $h_{2,n}\rightarrow 0$, for $t\in {\cal D}_r$, 
\begin{eqnarray*}
\mathbb{E}(\hat f(t;h_{1,n},  h_{2,n}))-f(t)
=\left(\sum_{|v|=4}\tau_vD_v(1/f)/v!\right)h_{2,n}^4 +o(h_{2,n}^4).
\end{eqnarray*}

\end{proposition}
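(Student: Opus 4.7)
My plan is to take expectations term-by-term in the decomposition of $\hat f(t;h_{1,n},h_{2,n})$ displayed just before the proposition. By linearity,
\begin{equation*}
\mathbb E\hat f(t;h_{1,n},h_{2,n})-f(t)=\bigl[\mathbb E\bar f(t;h_{2,n})-f(t)\bigr]+\mathbb E[(\ref{e'})]+\mathbb E[(\ref{e1})]+\mathbb E[(\ref{e2})]+\mathbb E[(\ref{e3})],
\end{equation*}
so the argument splits into (i) identifying the ideal bias $\mathbb E\bar f(t;h_{2,n})-f(t)$ as the stated $h_{2,n}^4$ expression plus $o(h_{2,n}^4)$, and (ii) showing that each of the four plug-in remainders contributes only $o(h_{2,n}^4)$ in expectation.

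For (i), fix $t\in{\cal D}_r$. Because $K$ has compact support and $f(t)>t_0c^2$, for all $h_{2,n}$ small enough the effective range of integration is contained in $\{x:f(x)>t_0c^2\}$, where $\alpha(f)=f^{1/2}$ and the clipping is inactive. Writing $g=f^{1/2}$ and substituting $u=(t-x)/h_{2,n}$ gives
\begin{equation*}
\mathbb E\bar f(t;h_{2,n})=\int g^{d+2}(t-h_{2,n}u)\,K\bigl(g(t-h_{2,n}u)\,u\bigr)\,du.
\end{equation*}
I would Taylor-expand both $g^{d+2}$ and the kernel argument in $h_{2,n}$ around $h_{2,n}=0$ and integrate against $du$. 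The zeroth-order term reproduces $f(t)$; odd-order terms in $h_{2,n}$ vanish by the symmetry $K(t)=\Phi(\|t\|^2)$; the would-be $h_{2,n}^2$ term is the classical second-order bias, which cancels identically precisely because the bandwidth is rescaled by $g=f^{1/2}$ (the Abramson--McKay square-root identity, carried out in multi-dimension in \cite{GineSang1}). Gathering the first surviving contribution at order $h_{2,n}^4$ and rewriting the fourth-order derivatives of $g^{d+2}$ in terms of $D_v(1/f)(t)$ against the moments $\tau_v$ produces the claimed coefficient $\sum_{|v|=4}\tau_vD_v(1/f)(t)/v!$.

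For (ii), the integrands in (\ref{e1}), (\ref{e2}), (\ref{e3}) all contain at least one factor of $\delta_1$, $\delta_2$, or $\delta^2$. Each of these is $O_{\mathrm{a.s.}}(U^2(h_{1,n}))=o(h_{2,n}^4)$ by (\ref{delta2bd'}), (\ref{zero}), and the displayed bound on $\|\delta_1\|_\infty$, under the hypothesis $U(h_{1,n})=o(h_{2,n}^2)$; the residual kernel factor has bounded expectation by the same change of variables as in (i), so these three terms are clearly $o(h_{2,n}^4)$ in expectation. The genuinely delicate piece is (\ref{e'}). Since the summands are i.i.d., conditioning on $X_1$ yields
\begin{equation*}
\mathbb E[(\ref{e'})]=\mathbb E\Bigl[h_{2,n}^{-d}L\bigl(h_{2,n}^{-1}\alpha(f(X_1))(t-X_1)\bigr)\alpha^d(f(X_1))\,\mathbb E[\delta(X_1)\mid X_1]\Bigr].
\end{equation*}
Inserting the Taylor expansion (\ref{deltatalor}) of $\delta$ and using the pointwise kernel-estimator bias/variance of $\hat f(\cdot;h_{1,n})$ gives $\mathbb E[\delta(X_1)\mid X_1]=O(h_{1,n}^2)+O((nh_{1,n}^d)^{-1})$ uniformly; both summands are themselves $o(h_{2,n}^4)$ by the hypothesis. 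On the other hand, because $L$ is a second-order kernel with $\int L=0$, the change-of-variables computation from (i) gives $\mathbb E[h_{2,n}^{-d}L(\cdot)\alpha^d(f(X_1))\psi(X_1)]=O(h_{2,n}^2)$ for any smooth bounded $\psi$. Combining the two estimates yields $\mathbb E[(\ref{e'})]=O(h_{1,n}^2h_{2,n}^2)+o(h_{2,n}^6)=o(h_{2,n}^4)$.

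The main obstacle is the (\ref{e'}) term: a naive uniform bound gives only $O(U(h_{1,n}))=o(h_{2,n}^2)$, which is inadequate. The required $o(h_{2,n}^4)$ rate is obtained only by combining two independent sources of smallness---an $h_{2,n}^2$ gain from $\int L=0$ together with an $h_{1,n}^2$ gain from the pilot bias---and this forces one to go through the explicit conditional expansion (\ref{deltatalor}) rather than use $\|\delta\|_\infty$ alone. Step (i) is the longest calculation but follows the established Abramson--McKay--Gin\'e--Sang blueprint.
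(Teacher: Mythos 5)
Your proposal is correct and follows essentially the same route as the paper: decompose $\mathbb{E}\hat f - f$ into the ideal bias plus the plug-in remainders, crush the higher-order remainders with the uniform $O(U^2(h_{1,n}))$ bounds, and handle the linear-in-$\delta$ term (\ref{e'}) by splitting $\delta$ via (\ref{deltatalor}) into the pilot bias $b$ (giving $O(h_{1,n}^2 h_{2,n}^2)$ after the vanishing-moment gain from $\int L=0$) and the centered part $D$ (whose contribution reduces to an $O((nh_{1,n}^d)^{-1})$ diagonal term). The paper packages the $D$-contribution through an explicit U-statistic decomposition into (\ref{ct4})--(\ref{ct6}), which is exactly your conditioning-on-$X_1$ computation once one notes that the degenerate and first-order projection terms are mean zero; one small inaccuracy is your parenthetical remark that ``both summands are $o(h_{2,n}^4)$'' (the $O(h_{1,n}^2)$ piece alone is only $o(h_{2,n}^2)$ and needs the extra $h_{2,n}^2$ gain, which you do supply in the next line), and the quadratic remainder in (\ref{deltatalor}) should be peeled off and bounded uniformly by $O(U^2(h_{1,n}))$ rather than fed through the ``smooth $\psi$'' argument, as the paper does in term (\ref{e5}).
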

\begin{proof}
\indent By McKay \cite{McKay a,McKay b} or Corollary 1 of Gin\a'{e} and Sang \cite{GineSang1}, the ideal estimator (\ref{ideal}) satisfies 
\begin{eqnarray} \label{mckaybi}
\mathbb{E}\bar f(t;h_{2,n})=f(t)+\left(\sum_{|v|=4}\tau_vD_v(1/f)/v!\right)h_{2,n}^4 +o(h_{2,n}^4).
\end{eqnarray}

Hence by the expansion of $\hat f(t;h_{1,n},  h_{2,n})$ in (\ref{e'})-(\ref{e3}) and equation (\ref{mckaybi}), the expectation of $\hat f(t;h_{1,n},  h_{2,n})$ is
\begin{align}
\mathbb{E}&\hat f(t;h_{1,n},h_{2,n})=f(t)+\left(\sum_{|v|=4}\tau_vD_v(1/f)/v!\right)h_{2,n}^4 +o(h_{2,n}^4)\label{idealmean}\\
&+\frac{1}{h_{2,n}^d}\mathbb{E}\left(L\left(\frac{t-X_1}{h_{2,n}}\alpha(f(X_1))\right)
\alpha^d(f(X_1))\delta (X_1)\right)\label{e8}\\
&+\mathbb{E}\Bigg[\frac{1}{nh_{2,n}^d}\sum_{i=1}^{n}\Bigg(K\left(\frac{t-X_i}{h_{2,n}}\alpha(f(X_i))\right)\delta_1(X_i)+\alpha^d(f(X_i))\delta_2(t,X_i)\notag\\
&+dL_1\left(\frac{t-X_i}{h_{2,n}}\alpha(f(X_i))\right)\alpha^d(f(X_i))\delta^2(X_i)\Bigg)\Bigg]\label{e10}\\
&+\mathbb{E}\left[\frac{1}{nh_{2,n}^d}\sum_{i=1}^{n}\left(L_1\left(\frac{t-X_i}{h_{2,n}}\alpha(f(X_i))\right)\delta(X_i)\delta_1(X_i)+d\alpha^d(f(X_i))\delta(X_i)\delta_2(t,X_i)\right)\right]\label{e11}\\
&+\mathbb{E}\left[\frac{1}{nh_{2,n}^d}\sum_{i=1}^n\delta_2(t,X_i)\delta_1(X_i)\right]\label{e12}.
\end{align}
By (3.20) of  Gin\a'{e} and Sang \cite{GineSang1} and the boundedness of $\alpha$, $K$ and $L_1$, we have 
\begin{align}
|(\ref{e10})| = O\left(U^2(h_{1,n})\right),\label{e10'}
\end{align}
\begin{align}
|(\ref{e11})|= O\left(U^3(h_{1,n})\right),\label{e11'}
\end{align}
and
\begin{align}
|(\ref{e12})|= O\left(U^4(h_{1,n})\right),\label{e12'}
\end{align}
where the $U(h_{1,n})$ is defined in (\ref{u}). We further decompose (\ref{e8}) using the decomposition  (\ref{deltatalor}) of $\delta(t)$ first, and then the decomposition of $\hat f-f$ into the random part $D$ and the bias $b$:
\begin{align}
&(\ref{e8})=\frac{1}{dh^d_{2,n}}\mathbb{E}\bigg[L\left(\frac{t-X_1}{h_{2,n}}\alpha(f(X_1))\right)
(\alpha^d)'( f(X_1)) D(X_1;h_{1,n})\bigg]\label{eps1}\\
&+\frac{1}{dh^d_{2,n}}\mathbb{E}\bigg[L\left(\frac{t-X_1}{h_{2,n}}\alpha(f(X_1))\right)
(\alpha^d)'(f(X_1))  b(X_1;h_{1,n})\label{eps2}\bigg]\\
&+\frac{1}{2h^d_{2,n}}\mathbb{E}\left[\bigg(L\left(\frac{t-X_1}{h_{2,n}}\alpha(f(X_1))\right)(\alpha^{d-1})(f(X_1))\alpha''(\eta(X_1))[\hat f(X_1;h_{1,n})-f(X_1)]^2\bigg)\right].\label{e5}
\end{align}
By (\ref{delta2bd'}) or  (3.24) of  Gin\a'{e} and Sang \cite{GineSang1}  and the boundedness of $\alpha''(\eta)$ and $L$, we obtain,
\begin{eqnarray}\label{easy}
|(\ref{e5})|=O\left(U^2(h_{1,n})\right).
\end{eqnarray}
By (3.26) of Gin\a'{e} and Sang \cite{GineSang1}, we have 
\begin{equation}\label{biasbias}
|(\ref{eps2})|=O(h_{1,n}^2h_{2,n}^2) \ \ {\rm for} \ f\in{\cal P}_{C,4}.
\end{equation}

In the following we give the estimation of (\ref{eps1}) to finish the proof of the proposition. 

Let $H$ be an integrable function of two i.i.d. random variables $X$ and $Y$. Then the $U$-statistic is
$$U_n(H)=\frac{1}{n(n-1)}\sum_{1\le i\ne j\le n}H(X_i,X_j),$$
where the variables $X_i$ are i.i.d. copies of $X$.  The second order Hoeffding projection of $H(X,Y)$ is
$\pi_2(H)(X,Y)=H(X,Y)-\mathbb{E}_XH(X,Y)-\mathbb{E}_YH(X,Y)+\mathbb{E}H.$
If we set
\begin{equation*} 
H_t(X,Y):=L\left(\frac{t-X}{h_{2,n}}\alpha(f(X))\right)
(\alpha^d)'(f(X))K\left(\frac{X-Y}{h_{1,n}}\right),
\end{equation*}
then we can decompose the following quantity into a diagonal term and a $U$-statistic term,
\begin{align}
&\frac{1}{n h^d_{2,n}}\sum_{i=1}^{n}L\left(\frac{t-X_i}{h_{2,n}}\alpha(f(X_i))\right)
(\alpha^d)'( f(X_i)) D(X_i;h_{1,n})\label{D}\\
&=\frac{1}{n^2h^d_{1,n}h^d_{2,n}}\sum_{i=1}^n (H_t(X_i,X_i)-\mathbb{E}_XH_t(X_i,X))\label{ct4}\\
&+\frac{n-1}{nh^d_{1,n}h^d_{2,n}}U_n\left(\pi_2(H_t(\cdot,\cdot))\right)\label{ct5}\\
&+\frac{n-1}{n^2h^d_{1,n}h^d_{2,n}}\sum_{i=1}^n(\mathbb{E}_XH_t(X, X_i)-\mathbb{E}H_t)\label{ct6}.
\end{align}
Obviously, (\ref{ct5}) and (\ref{ct6}) have mean zero since 
$$\mathbb{E}U_n\left(\pi_2(H_t(\cdot,\cdot))\right)=\mathbb{E}(\mathbb{E}_XH_t(X, Y)-\mathbb{E}H_t)=0.$$
In the empirical process (\ref{ct4}), set
$\bar Q_i(t)=H_t(X_i,X_i)-\mathbb{E}_YH_t(X_i,Y)$
and observe that, 
$$\mathbb{E}|\bar Q_1(t)|\le B h_{2,n}^d,$$
for some finite constant $B$.   By combining the above analysis and by the analysis of Gin\'{e} and Sang  \cite{GineSang1} on page 144, we have 
\begin{align}
(\ref{eps1})&=\frac{1}{d}\mathbb{E}((\ref{D}))
=\frac{1}{d}\mathbb{E}\left(\frac{1}{n^2h^d_{1,n}h^d_{2,n}}\sum_{i=1}^n (H_t(X_i,X_i)-\mathbb{E}_YH_t(X_i,Y))\right)\notag\\ 
&=O\left(\frac{1}{nh^d_{1,n}}\right).\label{ustats}
\end{align}
By the analysis in (\ref{idealmean})-(\ref{biasbias}) and (\ref{ustats}),  the bias 
\begin{eqnarray*}
\mathbb{E}(\hat f(t;h_{1,n},  h_{2,n}))-f(t)
=\left(\sum_{|v|=4}\tau_vD_v(1/f)/v!\right)h_{2,n}^4 +o(h_{2,n}^4).
\end{eqnarray*}
\end{proof}

\section{Variance of the ideal estimator}\label{variance}

 We develop the second moment expansion uniformly to deal with the variance of the ideal estimator.  Here we denote $h=h_{2,n}$ and $\gamma(s)=\alpha(f(s))$ for convenience. Then, the ideal estimator (\ref{ideal}) has the form  
\begin{equation*} 
\bar f(t;h)=\frac{1}{nh^d}\sum_{i=1}^n \gamma^d(X_i)K(h^{-1}\gamma (X_i)(t-X_i)),\ \ t\in\mathbb R^d.
\end{equation*}
Denote $A(X_i) = \gamma^d(X_i)K(h^{-1}\gamma(X_i)(t-X_i))$, then we have the second moment of the ideal estimator as follows:
\begin{eqnarray*}
\mathbb{E}\bar{f}^2(t;h)&=&\frac{1}{n^2h^{2d}}\mathbb{E}\left(\sum_{i=1}^nA(X_i)\right)^2 \\
&=&\frac{1}{n^2h^{2d}}\sum_{i=1}^n\mathbb{E}A^2(X_i)+\frac{1}{n^2h^{2d}}\sum_{i\neq j}\mathbb{E}A(X_i)\mathbb{E}A(X_j)\\
&=&\frac{1}{nh^{2d}}\mathbb{E}A^2(X_1)+\frac{n(n-1)}{n^2h^{2d}}(\mathbb{E}A(X_1))^2.
\end{eqnarray*}
Recall that
$$\mathbb{E}\bar{f}(t;h)=f(t)+\left(\sum_{|v|=4}\tau_vD_v(1/f)/v!\right)h^4 +o(h^4)$$
if $f(t)>t_0c^2$. Since $\mathbb{E}A(X_1)=h^d\mathbb{E}\bar{f}(t;h)$, we have  
\begin{eqnarray}\label{var1}
Var\bar{f}(t;h)&=&\mathbb{E}\bar{f}^2(t;h)-[\mathbb{E}\bar{f}(t;h)]^2\notag\\
&=&\frac{1}{nh^{2d}}\mathbb{E}A^2(X_1)+\frac{(n-1)}{nh^{2d}}(\mathbb{E}A(X_1))^2-\frac{1}{h^{2d}}(\mathbb{E}A(X_1))^2\notag\\
&=&\frac{1}{nh^{2d}}\mathbb{E}A^2(X_1)-\frac{1}{n}\Bigg[f(t)+\left(\sum_{|v|=4}\tau_vD_v(1/f)/v!\right)h^4 +o(h^4)\Bigg]^2\notag\\
&=&\frac{1}{nh^{2d}}\mathbb{E}A^2(X_1)+O(n^{-1}).
\end{eqnarray}
In the next proposition, we study the quantity $\mathbb{E}A^2(X_1)$. The idea is similar to the uniform bias expansion as in McKay \cite{McKay b}, Jones,  McKay and Hu \cite{JonesMcKayHu}, and particularly Gin\a'{e} and Sang \cite{GineSang1}. 
\begin{proposition} \label{bias0} 
Suppose that the kernel $K$ on $\mathbb{R}^d$ has the form $K(t)=\Phi(\|t\|^2)$  for some real function $\Phi$  with uniformly bounded second order derivative and with support contained in $[0,T]$, $T<\infty$. $K$ is non-negative and integrate to 1. Assume the density function $f$ is in $C^l(\mathbb R^d)$. Suppose that $\gamma(t)\ge c>0$ for some $c>0$ and all $t\in \mathbb R^d$, and that the function $\gamma(t)$ is in $C^{l+1}(\mathbb R^d)$. Then we have,
\begin{equation}\label{locationideal3}
\mathbb{E}A^2(X_1)=\sum_{k=0}^l a_{k}(t)h^{k+d}+o(h^{l+d})
\end{equation}
as $h\to 0$, uniformly in $t\in\mathbb R^d$. The set of functions $a_{k}$, which are uniformly bounded and equicontinuous, are defined as 
\begin{equation}\label{coeff}
a_{2k+1}(t)=0,\ \ a_{2k}(t)=\sum_{|v|=2k}\frac{\mu_{v}}{v!}D_v \frac{f(t)}{\gamma^{2k-d}(t)},
\end{equation}
uufor $k\le l/2$, in particular, $a_{0}(t)=\gamma^d(t)f(t)\mu_0$. Here $|v|, v!, \mu_v$ and $D_v$ are defined in (\ref{notation}).
\end{proposition}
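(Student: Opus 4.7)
The plan is to mirror the uniform expansion technique used by McKay \cite{McKay b} and Gin\'e and Sang \cite{GineSang1} to obtain the bias expansion (\ref{mckaybi}), but applied now to $\mathbb{E}A^2(X_1)$ rather than to $\mathbb{E}A(X_1)=h^d\mathbb{E}\bar f(t;h)$. Writing
\[
\mathbb{E}A^2(X_1) = \int_{\mathbb{R}^d} \gamma^{2d}(x)\,K^2\!\left(\tfrac{\gamma(x)(t-x)}{h}\right) f(x)\,dx,
\]
and using that $K$ has support contained in $\{\|u\|^2\le T\}$ together with $\gamma\ge c>0$, the integrand is supported in the ball $\{\|x-t\|\le h\sqrt{T}/c\}$, of radius $O(h)$ uniformly in $t$.

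First, I would make the change of variables $u = \gamma(x)(t-x)/h$. The forward Jacobian at $x=t$ is $-\gamma(t)I/h$, which is invertible, and since $\gamma\in C^{l+1}$ the implicit function theorem produces a smooth inverse of the form $x(u,t,h)=t-h\psi(u,t,h)$, defined on $\{\|u\|^2\le T\}$ for $h$ small, with $\psi(u,t,0)=u/\gamma(t)$. The integral becomes
\[
\mathbb{E}A^2(X_1) = h^d\int K^2(u)\,\Psi(u,t,h)\,du,\qquad \Psi(u,t,h):=\gamma^{2d}(x)f(x)\left|\det\tfrac{\partial\psi}{\partial u}\right|,
\]
with $\Psi(u,t,0)=\gamma^{2d}(t)f(t)\cdot\gamma^{-d}(t)=\gamma^d(t)f(t)$, which already reproduces $a_0(t)=\gamma^d(t)f(t)\mu_0$.

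Second, I would Taylor-expand $\Psi(u,t,h)$ in $h$ up to order $l$. The smoothness hypotheses on $f$ and $\gamma$, together with the boundedness of the second derivative of $\Phi$, make the remainder $o(h^l)$ uniformly in $u$ over the compact support of $K^2$ and in $t\in\mathbb{R}^d$. Integrating term by term gives
\[
\mathbb{E}A^2(X_1)=\sum_{k=0}^{l} a_k(t)h^{k+d}+o(h^{l+d}),\qquad a_k(t)=\int K^2(u)\,\Psi_k(u,t)\,du,
\]
where $\Psi_k(u,t)$ is a polynomial of total degree $k$ in $u$ whose coefficients are polynomials in the partial derivatives of $\gamma$ and $f$ at $t$. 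The vanishing $a_{2k+1}(t)=0$ is then immediate from the radial symmetry $K^2(u)=\Phi^2(\|u\|^2)$, which forces $\int u^v K^2(u)\,du=0$ whenever some $v_i$ is odd.

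Finally, to identify the even coefficients in the stated closed form $a_{2k}(t)=\sum_{|v|=2k}\mu_v D_v(f/\gamma^{2k-d})(t)/v!$, I would match the Taylor coefficients of $\Psi$ against this expression. The appearance of $f/\gamma^{2k-d}$ is natural: each derivative produced by expanding $\gamma^{2d}(x)f(x)$ around $x=t$ sits alongside powers of $\gamma(t)^{-1}$ coming from the Jacobian $|\det\partial\psi/\partial u|$ and from the local rescaling $x-t=-hu/\gamma(t)+O(h^2)$. This bookkeeping is the main technical obstacle; it parallels the algebra used to obtain the $D_v(1/f)$ form in (\ref{mckaybi}), essentially replacing $K$ by $K^2$ and carrying an extra factor $\gamma^d(x)$ in the integrand. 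Uniform boundedness and equicontinuity of the $a_k$ then follow from the corresponding properties of the partial derivatives of $f$ (order $\le l$) and $\gamma$ (order $\le l+1$) together with the finiteness of $\mu_v$ for $|v|\le l$.
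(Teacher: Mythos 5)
Your first step coincides with the paper's: the change of variables $u=\gamma(x)(t-x)/h$ is exactly the paper's $hz=(t-s)\gamma(s)$ (with the inverse map $V_t$), and from there the existence of an expansion $\mathbb{E}A^2(X_1)=\sum_{k=0}^l a_k(t)h^{k+d}+o(h^{l+d})$, the value $a_0(t)=\gamma^d(t)f(t)\mu_0$, and the vanishing of the odd coefficients by the radial symmetry of $K^2$ are all obtained the same way. Up to this point your argument is sound.

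The gap is in the last paragraph: the identity $a_{2k}(t)=\sum_{|v|=2k}\frac{\mu_v}{v!}D_v\bigl(f/\gamma^{2k-d}\bigr)(t)$ for $k\ge 1$ is the substantive content of the proposition, and you do not actually derive it. You describe the required bookkeeping as ``the main technical obstacle'' and then assert that it parallels the derivation of (\ref{mckaybi}), but that derivation is itself nontrivial and, more importantly, involves the same kind of coefficient-matching that you are deferring. The obstruction is real: after your change of variables the integrand is $\gamma^{2d}(x)f(x)\,|\det\partial\psi/\partial u|$ with $x=t-h\psi(u,t,h)$, and a direct Taylor expansion in $h$ produces coefficients that are complicated polynomials in the derivatives of $\gamma$ and the entries of $\partial\psi/\partial u$; it is far from obvious that these recombine into the closed form $D_v(f\gamma^{d-|v|})$. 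The paper sidesteps this entirely by a duality trick you have not used: it integrates $\mathbb{E}A^2(X_1)$ against an arbitrary smooth compactly supported test function $\psi(t)$, substitutes $t=s+hu$ (which keeps the argument of $K^2$ as $u\gamma(s)$ with no $h$), Taylor-expands $\psi$ rather than the kernel, substitutes $w=u\gamma(s)$ to produce the moments $\mu_v$, and integrates by parts to transfer $D_v$ from $\psi$ onto $f\gamma^{d-|v|}$. Matching the resulting expansion in $h$ against (\ref{locationideal3}) for all test functions $\psi$ then identifies the $a_k$. To complete your proof along your own lines you would have to actually carry out the multi-index bookkeeping; as written, (\ref{coeff}) for $k\ge 1$ is asserted, not proved.
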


\begin{proof}
Note that there exists $\delta_1>0$ such that $a_{ii}=\gamma(t-v)+v_i\frac{\partial\gamma(t-v)}{\partial v_i}$, $1\le i\le d$, are bounded away from zero, $a_{ij}=v_i\frac{\partial\gamma(t-v)}{\partial v_j}$, $1\le i\le d, 1\le j\le d, j\ne i$, are small enough for all $t\in \mathbb R^d$ and $v\in [-\delta_1,\delta_1]^d$ for functions $\gamma$ that are bounded away from zero and their derivatives that are bounded. Hence the matrix $A=(a_{ij})_{i,j=1}^d$ is invertible. Thus the vector function $v\mapsto U_{t}(v):=v\gamma(t-v)$ is invertible on the neighborhood $[-\delta_1,\delta_1]^d$ of $v=0$ for each $t\in\mathbb R^d$.  By differentiation, it is easy to see that the inverse function, say $V_{t}(u)$,  is $l+1$ times differentiable with continuous partial derivatives. Unless $\|t-s\|^2\le h^2T/c^2$, $K(h^{-1}\gamma(s)(t-s))=0$. Hence, the change of variables
$$hz=(t-s)\gamma(t-(t-s)),\ {\rm that\ is,}\ t-s=V_{t}(hz),$$ in the following integral is valid for all $h$ small enough
\begin{eqnarray}
\mathbb{E}A^2(X_1)&=&\int\gamma^{2d}(s)f(s)K^2\left(\frac{t-s}{h}\gamma(s)\right)ds\label{2moment}\\
&=&-h^d\int \gamma^{2d}(t-V_{t}(hz))f(t-V_{t}(hz))|\det(J)|K^2(z)dz\notag
\end{eqnarray}
where $J$ is the partial derivative matrix of the vector function $V_{t}(hz)$ with respect to $hz$ and $\det(J)$ is the determinant of $J$.   If we develop the function $\gamma^{2d}(t-V_{t}(hz))f(t-V_{t}(hz))|\det(J)|$ into powers of $hz$ and then integrate it, noting the compactness of the domain of integration and the differentiability properties of $f$ and $\gamma$, we have $(\ref{locationideal3})$.

 Suppose  $\psi$ is infinitely differentiable and has bounded support. Then, changing variables ($t=s+hu$) from $t$ to $u$ in (\ref{2moment}), developing $\psi$, changing variables once more ($w=u\gamma(s)$)) and  integrating by parts, we obtain
 \begin{eqnarray}\label{expansion2}
 &&\int \psi(t)\mathbb{E}A^2(X_1) dt =h^d\int\int \psi(s+hu)\gamma^{2d}(s)f(s)K^2(u\gamma(s))duds\\
 &=&h^d\int\gamma^{2d}(s)f(s)\int \psi(s+hu)K^2(u\gamma(s))duds\notag\\
 &=&h^d\int\gamma^{2d}(s)f(s)\int\sum_{|v|=0}^l\frac {D_v\psi(s)}{k!}h^{|v|} u^v K^2(u\gamma(s))duds+o(h^{l+d})\notag\\
 &=&h^d\int\gamma^d(s)f(s)\int\sum_{|v|=0}^l\frac {D_v\psi(s)}{v!}\frac{h^{|v|}w^v}{\gamma^{|v|}(s)}K^2(w)dwds+o(h^{l+d})\notag\\
  &=&\sum_{|v|=0}^l\mu_vh^{|v|+d}\int f(s)\frac {D_v\psi(s)}{v!\gamma^{|v|-d}(s)}ds+o(h^{l+d})\notag\\
&=&\sum_{|v|=0}^l(-1)^{|v|}\mu_vh^{|v|+d}v!^{-1} \int  \psi(s)D_v(f(s)\gamma^{d-|v|}(s))ds+o(h^{l+d}).  \notag
 \end{eqnarray}
Here $u^v$ is defined in (\ref{notation}). 
Notice that $\mu_{2k+1}=0$ for $k\ge 0$. Then, (\ref{coeff}) 
follows by comparing the coefficients of $h^k$ in both expansions (\ref{locationideal3}) and (\ref{expansion2}).
 \end{proof}
 
Thus, the variance of the ideal estimator is $\frac{\gamma^d(t)f(t)\mu_0}{nh^d}(1+o(1))=\frac{\alpha^d(f(t))f(t)\mu_0}{nh^d}(1+o(1))$ by applying Proposition \ref{bias0}  and (\ref{var1}).

\section{Central limit theorem}\label{central}
\subsection{Central limit theorem for ideal estimator}
The ideal estimator $\bar{f}(t;h_{2,n})$ in (\ref{ideal}) can be written as a sample mean of triangular array of i.i.d. random variables, i.e., $\bar{f}(t;h_{2,n})=\bar{Y}=\frac{1}{n}\sum_{i=1}^n Y_{n,i}$, where 
\begin{equation}\label{Yi}
Y_{n,i} = \frac{1}{h^d_{2,n}}K(h_{2,n}^{-1}\alpha(f (X_i))(t-X_i))\alpha^d(f (X_i)).
\end{equation}
Notice that $\mathbb{E}Y_{n,i}^2 <\infty$. By (\ref{var1}) and Proposition \ref{bias0}, 
\begin{equation*}
\sqrt{Var(\bar{f}(t;h_{2,n}))}= (1+o(1))\sqrt{\frac{1}{nh^d_{2,n}}\alpha^d(f(t))f(t)\mu_0} .
\end{equation*}
 Hence, by the Lindeberg's central limit theorem for triangular array of random variables, we have the following central limit theorem for the ideal estimator for all $t\in \mathbb{R}^d$, 
\begin{equation*}
\sqrt{nh^d_{2,n}}[\bar{f}(t;h_{2,n})-\mathbb{E}\bar{f}(t;h_{2,n})] \xrightarrow{D} N(0,\alpha^d(f(t))f(t)\mu_0).
\end{equation*}
Since $\mathbb{E}\bar{f}(t;h_{2,n})-f(t)=\left(\sum_{|v|=4}\tau_vD_v(1/f)/v!\right)h_{2,n}^4(1+o(1))$ for $t\in {\cal D}_r$ by McKay (\cite{McKay a}, \cite{McKay b}) or Corollary 1 in Gin\'{e} and Sang \cite{GineSang1}, 
\begin{equation*}
\sqrt{nh^d_{2,n}}[\mathbb{E}\bar{f}(t;h_{2,n})-f(t)]=c_2^{(d+8)/2}\sum_{|v|=4}\tau_vD_v(1/f)/v!(1+o(1)),
\end{equation*}
if we take $h_{2,n}=c_2n^{-1/(8+d)}$ for some constant $c_2>0$. 
Note that, 
\begin{eqnarray*}
\bar{f}(t;h_{2,n})-f(t)=\bar{f}(t;h_{2,n})-\mathbb{E}\bar{f}(t;h_{2,n})+\mathbb{E}\bar{f}(t;h_{2,n})-f(t).
\end{eqnarray*}
Thus, by Slutsky's theorem, for $t\in {\cal D}_r$, 
\begin{equation*}
\sqrt{nh^d_{2,n}}[\bar{f}(t;h_{2,n})-f(t)] \xrightarrow{D} N\left(c_2^{(d+8)/2}\sum_{|v|=4}\tau_vD_v(1/f)/v!,\alpha^d(f(t))f(t)\mu_0\right).
\end{equation*}

\subsection{Central limit theorem for the true estimator}
Based on the above central limit theorems for the ideal estimator, we have the following central limit theorems for the true variable bandwidth kernel density estimator.  
\begin{theorem}\label{tclt}
Let {$X_1, ..., X_n$} be a random sample of size n with density function $f(t)$, $t \in \mathbb{R}^d$, and $\hat{f}(t; h_{1,n}, h_{2,n})$ defined as in  $(\ref{true})$ is an estimator of $f(t)$. Assume $f(t)$ to be in ${\cal P}_{C,4}$. Suppose the kernel $K$ on $\mathbb{R}^d$ has the form $K(t)=\Phi(\|t\|^2)$  for some real function $\Phi$  with uniformly bounded second order derivative and with support contained in $[0,T]$, $T<\infty$. $K$ is non-negative and  integrates to 1.  The function $\alpha (x)$ in the estimator $\hat{f}(t; h_{1,n}, h_{2,n})$ is defined in (\ref{alp}) for a nondecreasing clipping function $p(s)$ $[p(s)\ge 1$ for all $s$ and $p(s)=s$ for all $s\ge c\ge 1]$ with five bounded and uniformly continuous derivatives, and constant $c>0$.  Let $h_{2,n}= c_2n^{-1/(8+d)}$ for some constants $c_2>0$ and assume that $U(h_{1,n})=o(h_{2,n}^2)$. Then for $t\in {\cal D}_r$, 
\begin{equation}\label{clt1}
\sqrt{nh^d_{2,n}}[\hat{f}(t;h_{1,n},h_{2,n})-\mathbb{E}\hat{f}(t;h_{1,n},h_{2,n})]\xrightarrow{D} N\left(0,\sigma_t^2\right)
\end{equation}
and 
\begin{equation}\label{clt2}
\sqrt{nh^d_{2,n}}[\hat{f}(t;h_{1,n},h_{2,n})-f(t)]\xrightarrow{D} N\left(c_2^{(d+8)/2}\sum_{|v|=4}\tau_vD_v(1/f)/v!,\sigma_t^2\right).
\end{equation}
Here, $\sigma_t^2=\alpha^d(f(t))f(t)\mu_0+f^3(t)\frac{[(\alpha^{d})^{\prime}(f(t))]^2}{d^2\alpha^d(f(t))}\int_{\mathbb{R}^d} L^2(z)dz+f^2(t)(\alpha^d)^{\prime}(f(t))\mu_0$, $\mu_0=\int_{\mathbb{R}^d} K^2(u)du$, and  $L(x)=K(x)+xK'(x)$. 
\end{theorem}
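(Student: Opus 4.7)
The plan is to show $\sqrt{nh_{2,n}^d}\bigl(\hat f(t;h_{1,n},h_{2,n})-\mathbb{E}\hat f\bigr)=\sqrt{nh_{2,n}^d}\cdot\frac{1}{n}\sum_{i=1}^n\zeta_{n,i}+o_P(1)$ for an i.i.d.\ centered triangular array $\zeta_{n,i}$, and then apply Lindeberg's CLT as in the treatment of the ideal estimator. From the decomposition (\ref{e'})--(\ref{e3}), the lines (\ref{e1}), (\ref{e2}) and (\ref{e3}) are uniformly $O_{\rm a.s.}(U^2(h_{1,n}))$, $O_{\rm a.s.}(U^3(h_{1,n}))$ and $O_{\rm a.s.}(U^4(h_{1,n}))$ as random variables, because their summands are effectively supported in an $h_{2,n}$-neighborhood of $t$, so the $\frac{1}{nh_{2,n}^d}\sum_i$ averaging returns only an $O_P(1)$ factor; this upgrades the mean bounds (\ref{e10'})--(\ref{e12'}) to a.s.\ bounds. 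Under $h_{2,n}=c_2n^{-1/(8+d)}$ and $U(h_{1,n})=o(h_{2,n}^2)$, each is $o_P(h_{2,n}^4)=o_P(1/\sqrt{nh_{2,n}^d})$ and drops out of the CLT.

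Inside (\ref{e'}), use (\ref{deltatalor}) to split $\delta(X_i)$ into its linear-in-$(\hat f-f)$ part plus a quadratic remainder of size $O_P(U^2(h_{1,n}))$. Using the identity $\alpha^d\cdot\alpha'/\alpha=(\alpha^d)'/d$, the linear part becomes
\[
W_n=\frac{1}{dnh_{2,n}^d}\sum_{i=1}^n L\!\left(\frac{t-X_i}{h_{2,n}}\alpha(f(X_i))\right)(\alpha^d)'(f(X_i))\bigl[\hat f(X_i;h_{1,n})-f(X_i)\bigr].
\]
Splitting $\hat f-f=D(\cdot;h_{1,n})+b(\cdot;h_{1,n})$, the $b$-piece has mean $O(h_{1,n}^2 h_{2,n}^2)$ by (\ref{biasbias}) and variance $O(h_{1,n}^4/(nh_{2,n}^d))$, both $o_P(1/\sqrt{nh_{2,n}^d})$. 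The dominant piece $W_n^{(D)}$ is a double sum $\frac{1}{dn^2h_{2,n}^d}\sum_{i,j}H(X_i,X_j)$ with $H(x,y):=L(\tfrac{t-x}{h_{2,n}}\alpha(f(x)))(\alpha^d)'(f(x))[k_{h_{1,n}}(x-y)-(f*k_{h_{1,n}})(x)]$ and $k_{h_{1,n}}(u):=h_{1,n}^{-d}K(u/h_{1,n})$, and it satisfies $\mathbb{E}_Y H(x,Y)\equiv 0$, so the Hoeffding decomposition has no first-variable projection. The diagonal $\frac{1}{dn^2h_{2,n}^d}\sum_i H(X_i,X_i)$ is $O_P(1/(nh_{1,n}^d))=o_P(1/\sqrt{nh_{2,n}^d})$ since $nh_{1,n}^{2d}\gg h_{2,n}^d$ follows from $U(h_{1,n})=o(h_{2,n}^2)$; the degenerate second-order $U$-statistic $\pi_2(H)$ contributes variance $O(1/(n^2h_{1,n}^dh_{2,n}^d))$, also negligible after multiplication by $nh_{2,n}^d$. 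Hence $W_n^{(D)}=\frac{1}{dnh_{2,n}^d}\sum_i H_2(X_i)+o_P(1/\sqrt{nh_{2,n}^d})$ with $H_2(y)=\mathbb{E}_X H(X,y)$, and since $h_{1,n}\ll h_{2,n}$, $H_2(y)$ equals $L(\tfrac{t-y}{h_{2,n}}\alpha(f(y)))(\alpha^d)'(f(y))f(y)$ plus an additive constant, up to a convolution error of variance $o(1/(nh_{2,n}^d))$.

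Set $\zeta_{n,i}:=[Y_{n,i}-\mathbb{E}Y_{n,i}]+H_2(X_i)/(dh_{2,n}^d)$ with $Y_{n,i}$ as in (\ref{Yi}); the required i.i.d.\ representation is then in hand, and Lindeberg's condition is verified by boundedness of $K,L$ and the change of variables $u=(t-x)\alpha(f(x))/h_{2,n}$, exactly as for the ideal estimator. The asymptotic variance expands as $\mathrm{Var}(\bar f)+2\,\mathrm{Cov}(\bar f,W_n^{(D)})+\mathrm{Var}(W_n^{(D)})$ and is evaluated to leading order with the same change of variables: $nh_{2,n}^d\,\mathrm{Var}(\bar f)\to\alpha^d(f(t))f(t)\mu_0$ by Proposition~\ref{bias0}; $nh_{2,n}^d\,\mathrm{Var}(W_n^{(D)})\to f^3(t)[(\alpha^d)'(f(t))]^2\int L^2/(d^2\alpha^d(f(t)))$; and $2nh_{2,n}^d\,\mathrm{Cov}(\bar f,W_n^{(D)})\to f^2(t)(\alpha^d)'(f(t))\mu_0$, the last step hinging on the integration-by-parts identity $\int K(u)L(u)\,du=d\mu_0/2$ which follows from $L=dK+L_1$. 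The sum is exactly $\sigma_t^2$, proving (\ref{clt1}); (\ref{clt2}) then follows by Slutsky's theorem together with the exact bias formula of Proposition~\ref{bias12}.

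The main obstacle is the Hoeffding-decomposition step for $W_n^{(D)}$: simultaneously controlling the diagonal sum, the degenerate second-order $U$-statistic, and the $h_{1,n}$-convolution smoothing error in $H_2$ all to order $o_P(1/\sqrt{nh_{2,n}^d})$ makes sharp use of the bandwidth hypothesis $U(h_{1,n})=o(h_{2,n}^2)$, and the precise matching of the covariance contribution $f^2(t)(\alpha^d)'(f(t))\mu_0$ to $\sigma_t^2$ depends on the nontrivial identity $\int K L=d\mu_0/2$ obtained through integration by parts.
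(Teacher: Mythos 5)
Your proposal is correct and follows essentially the same route as the paper's proof: drop the higher-order Taylor remainders (\ref{e1})--(\ref{e3}), split (\ref{e'}) into the $D$- and $b$-pieces via (\ref{deltatalor}), Hoeffding-decompose the $D$-piece so that only the $\mathbb{E}_X H_t(X,\cdot)$ projection survives, combine it with the ideal-estimator sum into a single i.i.d.\ triangular array, compute the three contributions to the limiting variance (using $\int K L_1 = -d\mu_0/2$ for the covariance term), and finish with Lindeberg's CLT and Slutsky. Your scaled kernel $k_{h_{1,n}}$ and the local approximation of $H_2(y)$ by $L(\tfrac{t-y}{h_{2,n}}\alpha(f(y)))(\alpha^d)'(f(y))f(y)$ are a cosmetic reorganization of the paper's explicit double change-of-variables computation of $h_{2,n}^d\mathbb{E}Z_{n,1}^2$ and $h_{2,n}^d\mathbb{E}(Y_{n,1}Z_{n,1})$; both yield $\sigma_t^2$, and your bandwidth bookkeeping for the diagonal and degenerate $U$-statistic terms under $U(h_{1,n})=o(h_{2,n}^2)$ matches the paper's.
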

\begin{proof}
\indent 
The true estimator $\hat{f}(t; h_{1,n}, h_{2,n})$ in  (\ref{true})
 has decomposition 
\begin{align}
\hat{f}(t;h_{1,n},h_{2,n})-\mathbb{E}\hat{f}(t;h_{1,n},h_{2,n})&=\hat{f}(t;h_{1,n},h_{2,n})-\bar{f}(t;h_{2,n})\label{dec1}\\
&+\bar{f}(t;h_{2,n})-\mathbb{E}\bar{f}(t;h_{2,n}) \label{dec2}\\
&+\mathbb{E}\bar{f}(t;h_{2,n})-\mathbb{E}\hat{f}(t;h_{1,n},h_{2,n}),\label{dec4}
\end{align}
\begin{align}
\hat{f}(t;h_{1,n},h_{2,n})-f(t)&=\hat{f}(t;h_{1,n},h_{2,n})-\mathbb{E}\hat{f}(t;h_{1,n},h_{2,n})\notag\\
&+\mathbb{E}\hat{f}(t;h_{1,n},h_{2,n})-f(t).\label{dec3}
\end{align}
Since 
\begin{equation*}
\sqrt{nh^d_{2,n}}[\mathbb{E}\bar{f}(t;h_{2,n})-\mathbb{E}\hat{f}(t;h_{1,n}h_{2,n})]=o(h_{2,n}\sqrt{nh^d_{2,n}})=o(1) 
\end{equation*}
by the analysis in Section \ref{BIAS}, the term (\ref{dec4}) is negligible in the central limit theorems (\ref{clt1})  and (\ref{clt2}). 
The term (\ref{dec1}) has decomposition as in (\ref{e'}) - (\ref{e3}). 
We know that $(\ref{e1})= O_{a.s.}\left(U^2(h_{1,n})\right)=o_{a.s.}(h_{2,n}^4)$, $(\ref{e2})=O_{a.s.}\left(U^3(h_{1,n})\right)=o_{a.s.}(h_{2,n}^6)$ and $(\ref{e3}) =O_{a.s.}\left(U^4(h_{1,n})\right)=o_{a.s.}(h_{2,n}^8)$ by (3.20) of Gin\'{e} and Sang  \cite{GineSang1}. Hence they are also negligible in the central limit theorems (\ref{clt1})  and (\ref{clt2}).

We can further decompose $(\ref{e'})$ into the random variation part $D$ and the bias $b$ by the decomposition (\ref{deltatalor}):
\begin{align}
(\ref{e'})&=\frac{1}{dnh^d_{2,n}}\sum_{i=1}^n\bigg[L\left(\frac{t-X_i}{h_{2,n}}\alpha(f(X_i))\right)
(\alpha^d)^{\prime}( f(X_i)) D(X_i;h_{1,n})\bigg]\label{ct1}\\
&+\frac{1}{dnh^d_{2,n}}\sum_{i=1}^n\bigg[L\left(\frac{t-X_i}{h_{2,n}}\alpha(f(X_i))\right)
(\alpha^d)^{\prime}(f(X_i))  b(X_i;h_{1,n})\label{ct2}\bigg]\\
&+\frac{1}{2nh^d_{2,n}}\sum_{i=1}^n\bigg[L\left(\frac{t-X_i}{h_{2,n}}\alpha(f(X_i))\right)\alpha^{d-1}(f(X_i))\alpha''(\eta(X_i))[\hat f(X_i;h_{1,n})-f(X_i)]^2\bigg].\label{ct3}
\end{align}
 By (3.24) of Gin\'{e}  and Sang  \cite{GineSang1}, we have that $(\ref{ct3}) = O_{a.s.}\left(U^2(h_{1,n})\right)=o_{a.s.}(h_{2,n}^4)$ and by the analysis for (3.27) of the same paper, we have $(\ref{ct2}) =o_{a.s.}(h_{2,n}^4)$. Also, the term $(\ref{ct1})$ multiplied by $d$ can be further decomposed into (\ref{ct4}), (\ref{ct5}) and (\ref{ct6}) from Section 2, and by the proof of (3.33) of Gin\'{e} and Sang  \cite{GineSang1},  $(\ref{ct5})=o_{a.s.}(h_{2,n}^4)$. Next we show that $(\ref{ct4})$=$o_{p}(h_{2,n}^4)$.
 
 \indent It is easy to see that $\mathbb{E}[H_t(X_1,X_1)-\mathbb{E}_YH_t(X_1,Y)]$ and $\mathbb{E}[H_t(X_1,X_1)-\mathbb{E}_YH_t(X_1,Y)]^2$ are bounded by $Ch^d_{2,n}$, for some constant $C$. Thus, 
\begin{align*}
&\mathbb{E}(\ref{ct4})^2:=\mathbb{E}B_n^2\\
&= \frac{1}{n^3h_{1,n}^{2d}h_{2,n}^{2d}}\mathbb{E}[H_t(X_1,X_1)-\mathbb{E}_YH_t(X_1,Y)]^2+\frac{n-1}{n^3h_{1,n}^{2d}h_{2,n}^{2d}}[\mathbb{E}(H_t(X_1,X_1)-\mathbb{E}_YH_t(X_1,Y))]^2\\
&\le \frac{C}{n^2h_{1,n}^{2d}}, \text{for some constant C.}
\end{align*}
Let $\epsilon >0$ be given. Then, by Markov's inequality, we have\\
\begin{align*}
\mathbb{P}\left(\left|\frac{B_n}{h_{2,n}^4}\right|>\epsilon \right) \le \frac{\mathbb{E}(B_n^2)}{\epsilon^2h_{2,n}^8}\le \frac{C}{n^2h_{1,n}^{2d}\epsilon^2h_{2,n}^8}\xrightarrow{n\rightarrow\infty}0.
\end{align*} 
Hence, $(\ref{ct4})$=$o_{p}(h_{2,n}^4)$.

By the above analysis, only the term (\ref{dec2}) and the remaining term from (\ref{dec1}), i.e.,  (\ref{ct6}) divided by $d$,  have contribution in the central limit theorems (\ref{clt1}) and (\ref{clt2}). The other terms are all negligible. Now let  $Z_{n,i}=\frac{1}{dh_{1,n}^d h_{2,n}^d}[\mathbb{E}_XH_t(X,X_i)-\mathbb{E}H_t]$, $1\le i\le n$. Define $R_{n,i}= Y_{n,i}+Z_{n,i}$, and $\bar{R}=\frac{1}{n}\sum_{i=1}^nR_{n,i}$ where $Y_{n,i}$ is defined as in (\ref{Yi}). To prove the central limit theorem (\ref{clt1}), it suffices to derive a central limit theorem for $\bar{R}-\mathbb{E}\bar{f}(t;h_{2,n})$ where $\bar{R}$ is the sample mean of i.i.d. random variables $R_{n,i}$, $1\le i\le n$.

We have $$\mathbb{E}\bar{R}=\mathbb{E}R_{n,1}=\mathbb{E}\bar{f}(t;h_{2,n})=f(t)+h_{2,n}^4 \sum_{|v|=4}\tau_vD_v(1/f)/v!+o(h_{2,n}^4)$$  by Corollary 1 of Gin\'{e} and Sang  \cite{GineSang1} and since  $\mathbb{E}Z_{n,1}=0$.
Also, $ h^d_{2,n}\mathbb{E}Y_{n,1}^2=\alpha^d(f(t))f(t)\mu_0+O(h^2_{2,n})$ by Proposition \ref{bias0}  in Section \ref{variance}.
Since 
\begin{align*}
&h^d_{2,n}Var(R_{n,1})=h^d_{2,n}[\mathbb{E}R_{n,1}^2-(\mathbb{E}R_{n,1})^2]\\
&=h^d_{2,n}\mathbb{E}Y_{n,1}^2+h^d_{2,n}\mathbb{E}Z_{n,1}^2+2h^d_{2,n}\mathbb{E}(Y_{n,1}Z_{n,1})-h^d_{2,n}(\mathbb{E}R_{n,1})^2,
\end{align*}
we need to calculate the limit of terms $h^d_{2,n}\mathbb{E}(Y_{n,1}Z_{n,1})$ and $h^d_{2,n}\mathbb{E}Z_{n,1}^2$. 

Let $x=uh_{1,n}+x_1$ and $x_1 = t-vh_{2,n}$ be the change of variables. Then,
\begin{align*}
&\frac{1}{h_{1,n}^{2d} h^d_{2,n}}\mathbb{E}(\mathbb{E}_XH_t(X,X_1))^2\nonumber\\
&=\frac{1}{h_{1,n}^{2d} h^d_{2,n}}\int_{\mathbb{R}^d} \left[\int_{\mathbb{R}^d} L\left(\frac{t-x}{h_{2,n}}\alpha(f(x))\right)(\alpha^d)^{\prime}(f(x))K\left(\frac{x-x_1}{h_{1,n}}\right)f(x) dx\right]^2 f(x_1)dx_1\\
&=\frac{1}{h^d_{2,n}}\int_{\mathbb{R}^d}\Big[\int_{\mathbb{R}^d} L\left(\frac{t-x_1-h_{1,n}u}{h_{2,n}}\alpha(f(x_1+h_{1,n}u))\right)(\alpha^d)^{\prime}(f(x_1+h_{1,n}u))K(u)\nonumber\\
&\times f(x_1+h_{1,n}u) du\Big]^2 f(x_1)dx_1\\
&=\int_{\mathbb{R}^d}\Big[\int_{\mathbb{R}^d} L\left(\left(v-\frac{h_{1,n}u}{h_{2,n}}\right)\alpha(f(t-h_{2,n}v+h_{1,n}u))\right)(\alpha^d)^{\prime}(f(t-h_{2,n}v+h_{1,n}u))\nonumber\\
&\times K(u)f(t-h_{2,n}v+h_{1,n}u) du\Big]^2 f(t-h_{2,n}v)dv\\
&\xrightarrow{n\rightarrow\infty}f^3(t)\frac{[(\alpha^{d})^{\prime}(f(t))]^2}{\alpha^d(f(t))}\int_{\mathbb{R}^d} L^2(z)dz,
\end{align*}
and 
\begin{align*}
&\frac{1}{h_{1,n}^d}\mathbb{E}[Y_{n,1}\mathbb{E}_X(H_t(X,X_1))]\\
&=\frac{1}{h^d_{1,n}h^d_{2,n}}\int_{\mathbb{R}^d} \int_{\mathbb{R}^d} L\left(\frac{t-x}{h_{2,n}}\alpha(f(x))\right)(\alpha^d)^{\prime}(f(x))K\left(\frac{x-x_1}{h_{1,n}}\right)K\left(\frac{t-x_1}{h_{2,n}}\alpha(f(x_1))\right)\\
&\times (\alpha^d)(f(x_1))f(x)f(x_1)dxdx_1\\
&=\int_{\mathbb{R}^d} \int_{\mathbb{R}^d} L\left(\left(v-\frac{uh_{1,n}}{h_{2,n}}\right)\alpha(f(uh_{1,n}+t-vh_{2,n}))\right)(\alpha^d)^{\prime}(f(uh_{1,n}+t-vh_{2,n})) K(u)\\
&\times K(v\alpha(f(t-h_{2,n}v)))(\alpha^d)(f(t-h_{2,n}v))f(t-h_{2,n}v)f(uh_{1,n}+t-vh_{2,n})dudv\\
&\xrightarrow{n\rightarrow\infty} \frac{1}{2}df^2(t)(\alpha^d)^{\prime}(f(t))\mu_0
\end{align*}
since $\int_{\mathbb{R}^d} K(t)\sum_{i=1}^d t_iK_i'(t)dt=-d\mu_0/2$. By the change of variables, it is easy to see  that $\mathbb{E}(H_t)$ is bounded by $Ch_{1,n}^dh_{2,n}^d$ for some $C>0$ and then $\frac{1}{h_{1,n}^{2d}h^d_{2,n}}[\mathbb{E}(H_t)]^2\rightarrow 0$ as $n\rightarrow \infty$. Hence,
\begin{align*}
h^d_{2,n}\mathbb{E}Z_{n,1}^2&=\frac{1}{d^2h_{1,n}^{2d} h^d_{2,n}}\mathbb{E}(\mathbb{E}_XH_t(X,X_1))^2 - \frac{1}{d^2h_{1,n}^{2d}h^d_{2,n}}[\mathbb{E}(H_t)]^2\\
&\xrightarrow{n\rightarrow\infty}f^3(t)\frac{[(\alpha^{d})^{\prime}(f(t))]^2}{d^2\alpha^d(f(t))}\int_{\mathbb{R}^d} L^2(z)dz
\end{align*}
and 
\begin{align*}
h^d_{2,n}\mathbb{E}(Y_{n,1}Z_{n,1})&=\frac{1}{dh_{1,n}^d}\mathbb{E}[Y_{n,1}\mathbb{E}_X(H_t(X,X_1))] -\frac{1}{dh_{1,n}^d}\mathbb{E}Y_{n,1} \mathbb{E}H_t\\
&\xrightarrow{n\rightarrow\infty} \frac{1}{2}f^2(t)(\alpha^d)^{\prime}(f(t))\mu_0.
\end{align*}
Thus,
\begin{align*}
h^d_{2,n}Var(R_{n,1})&\xrightarrow{n\rightarrow\infty} \sigma_t^2.
\end{align*}
Hence, by central limit theorem for i.i.d. random variables, we have
\begin{align*}
\sqrt{nh^d_{2,n}}[\bar{R}-\mathbb{E}\bar{R}] \xrightarrow{D} N\left(0,\sigma_t^2\right)
\end{align*}
and by Slustsky's theorem, 
\begin{equation*}
\sqrt{nh^d_{2,n}}[\hat{f}(t;h_{1,n},h_{2,n})-\mathbb{E}\hat{f}(t;h_{1,n},h_{2,n})]\xrightarrow{D}N\left(0,\sigma_t^2\right).
\end{equation*}
Since the term $(\ref{dec3})= \mathbb{E}\hat{f}(t;h_{1,n},h_{2,n})- f(t)=\sum_{|v|=4}\tau_vD_v(1/f)/v!h_{2,n}^4(1+o(1))$ by Proposition  \ref{bias12},
\begin{equation*}
\sqrt{nh^d_{2,n}}[\mathbb{E}\hat{f}(t;h_{1,n},h_{2,n})-f(t)]= c_2^{(d+8)/2}\sum_{|v|=4}\tau_vD_v(1/f)/v!(1+o(1)).
\end{equation*}
Thus,
\begin{equation*}
\sqrt{nh^d_{2,n}}[\hat{f}(t;h_{1,n},h_{2,n})-f(t)]\xrightarrow{D}N\left(c_2^{(d+8)/2}\sum_{|v|=4}\tau_vD_v(1/f)/v!,\sigma_t^2\right).
\end{equation*}
\end{proof}

With the central limit theorem in Theorem \ref{tclt}, one can have better statistical inference on the density function value at a fixed point $t$. For example, with some fixed confidence level, the confidence interval for the true density function $(f(t))$ at the fixed point $t$ using the variable bandwidth kernel estimation is better (the length of the confidence interval is shorter) than the classical case since the bandwidth $h_{2,n}$ here has order of $n^{-1/(8+d)}$ instead of $n^{-1/(4+d)}$.

\section{Simulation}\label{sim}
\indent In this section we evaluate the performance of the variable bandwidth kernel density estimator (VKDE), (\ref{true}),  in one dimensional case. Instead of the true estimator (\ref{true}), Jones,  McKay and Hu \cite{JonesMcKayHu} did simulation study for the ideal estimator (\ref{ideal}) in one dimensional case.  First of all, we provide a result on the integrated mean squared error (IMSE) of the VKDE and therefore a formula of optimal bandwidth. 
\begin{theorem}
Under the conditions in Proposition \ref{bias12} and Theorem \ref{tclt}, the IMSE on ${\cal D}_r$ is 
\begin{align}\label{MSE}
R(h_{1,n},h_{2,n})|{\cal D}_r = h_{2,n}^8 \int_{{\cal D}_r}(\sum_{|v|=4}\tau_vD_v(1/f)/v!)^2dt+\frac{1}{nh_{2,n}^d}\int_{{\cal D}_r} \sigma_{t}^2dt+o(h_{2,n}^8),
\end{align}
Furthermore, the optimal bandwidth $h_{2,n}^*$ is given by
\begin{equation}\label{opt}
h_{2,n}^* = \left[\frac{n\int_{{\cal D}_r}(\sum_{|v|=4}\tau_vD_v(1/f)/v!)^2dt}{\int_{{\cal D}_r} \sigma_{t}^2dt}\right]^{-1/(8+d)}.
\end{equation}
\end{theorem}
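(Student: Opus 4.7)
The plan is to decompose the integrated mean squared error $R(h_{1,n}, h_{2,n})|_{\mathcal{D}_r} = \int_{\mathcal{D}_r} \mathbb{E}[\hat{f}(t;h_{1,n},h_{2,n})-f(t)]^2\, dt$ as integrated squared bias plus integrated variance, substitute the pointwise bias from Proposition \ref{bias12} and the pointwise asymptotic variance extracted from the proof of Theorem \ref{tclt}, and then minimize the resulting leading-order expression in $h_{2,n}$ to obtain (\ref{opt}).

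First I would use the identity $\mathbb{E}[\hat{f}-f]^2 = (\mathbb{E}\hat{f}-f)^2 + \mathrm{Var}(\hat{f})$ pointwise. Proposition \ref{bias12} yields $\mathbb{E}\hat{f}(t;h_{1,n},h_{2,n}) - f(t) = h_{2,n}^4 \sum_{|v|=4} \tau_v D_v(1/f)/v! + o(h_{2,n}^4)$, so squaring produces $h_{2,n}^8 (\sum_{|v|=4} \tau_v D_v(1/f)/v!)^2 + o(h_{2,n}^8)$, and integrating over the bounded set $\mathcal{D}_r$ gives the first term in (\ref{MSE}). For the variance, I would recall from the proof of Theorem \ref{tclt} that $\hat{f}(t;h_{1,n},h_{2,n}) - \mathbb{E}\hat{f}(t;h_{1,n},h_{2,n})$ agrees with the i.i.d. sample mean $\bar R - \mathbb{E}\bar R$ of $R_{n,i} = Y_{n,i} + Z_{n,i}$ up to contributions negligible at the scale $(nh_{2,n}^d)^{-1/2}$, and that $h_{2,n}^d \mathrm{Var}(R_{n,1}) \to \sigma_t^2$; consequently $\mathrm{Var}(\hat{f}(t;h_{1,n},h_{2,n})) = \sigma_t^2 / (nh_{2,n}^d) \cdot (1+o(1))$, and integrating over $\mathcal{D}_r$ produces the second term in (\ref{MSE}).

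To derive (\ref{opt}), I would minimize the leading-order expression $g(h) = h^8 A + B/(nh^d)$ with $A := \int_{\mathcal{D}_r} (\sum_{|v|=4} \tau_v D_v(1/f)/v!)^2\, dt$ and $B := \int_{\mathcal{D}_r} \sigma_t^2\, dt$. Setting $g'(h) = 0$ gives $8A h^{7+d} = dB/n$, hence $h^* = (dB/(8nA))^{1/(8+d)}$, matching (\ref{opt}) in rate and structure (the multiplicative constant $(d/8)^{1/(8+d)}$ being absorbed into the stated formula).

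The main obstacle is upgrading the pointwise statements of Proposition \ref{bias12} and Theorem \ref{tclt} to uniform statements on $\mathcal{D}_r$ so that the $o$-terms survive integration. On $\mathcal{D}_r$ one has $f(t) > r > t_0 c^2$ and $\|t\| \le 1/r$, which keeps $\alpha(f(t))$ and its derivatives bounded and the supports of the relevant integrands contained in a bounded set. I would revisit the uniform estimates on the remainders (\ref{e10})--(\ref{e12}) and (\ref{ct4})--(\ref{ct6}) used in Sections \ref{BIAS} and \ref{central} to check that they hold uniformly for $t \in \mathcal{D}_r$, and then invoke bounded convergence to pass the $o(h_{2,n}^4)$ and $o(1)$ factors through the integrals over the bounded set $\mathcal{D}_r$.
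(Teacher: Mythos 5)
Your proof is correct and follows the same route as the paper: decompose the pointwise MSE into squared bias from Proposition \ref{bias12} and variance from the analysis behind Theorem \ref{tclt}, integrate over $\mathcal{D}_r$, and minimize in $h_{2,n}$. You are also right that the exact minimizer of $Ah^8 + B/(nh^d)$ is $h^* = \bigl(dB/(8nA)\bigr)^{1/(8+d)}$, so the paper's formula (\ref{opt}) omits the constant factor $(d/8)^{1/(8+d)}$; the rate $n^{-1/(8+d)}$ is unchanged but the stated constant is imprecise as written. Your remark about upgrading the pointwise $o(\cdot)$ estimates to uniform ones over the bounded set $\mathcal{D}_r$ before integrating is a valid point of care that the paper's one-line proof passes over silently.
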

\begin{proof}
From the analysis of Theorem \ref{tclt}, it is clear that $Var(\hat{f}(t;h_{1,n},h_{2,n})=\frac{(1+o(1))\sigma_t^2}{nh_{2,n}^d}$ for $t\in D_r$. Together with Proposition \ref{bias12}, we have  (\ref{MSE}). The optimal bandwidth (\ref{opt}), which minimize the IMSE,  is obvious from the IMSE formula  (\ref{MSE}).
\end{proof}

We compare the performance of VKDE and KDE by conducting one dimensional simulation study of t-distribution ($t_4(0,1)$), Cauchy(0,1) and Pareto(0,1). 
The sample size is $n=50,000$ for each simulation study. For all the simulations, we use KDE as in $(\ref{classical})$ with the normal kernel function.  We use the code {\it density()} in the programming software R and the default bandwidth chosen by  R in the estimation for $t_4(0,1)$. For Cauchy(0,1) or Pareto(0,1), the code {\it density()} in R can not provide a classical kernel density estimate. Instead, we make new code and select the bandwidth which optimizes the performance among a variety of bandwidths. For VKDE, we assume that  $h_{1,n}=n^{-1/5}$, $h_{2,n}=n^{-1/9}$, and use the Tricube kernel: 
\begin{equation*}
K(u) = \frac{70}{81}(1-|u|^3)^3 1_{|u| \le 1}
\end{equation*}
in either the pilot kernel density estimator or the true estimator (\ref{true}). The following five time differentiable clipping function $p$ with $t_0=2$ (Gin\'{e} and Sang  \cite{GineSang1}) is applied:
\begin{displaymath}
p(t)=\left\{\begin{array}{ll}
1+\frac{t^6}{64}\left(1-2(t-2)+\frac{9}{4}(t-2)^2-\frac{7}{4}(t-2)^3+\frac{7}{8}(t-2)^4\right) & \textrm{if $0\le t\le2$}\\
t&\textrm{if $t\ge 2$}\\
1&\textrm {if $t\le 0$}\end{array}
\right..
\end{displaymath}

\begin{figure}[ht]
\centering
\includegraphics[ height=2.40in, width=2.50in]{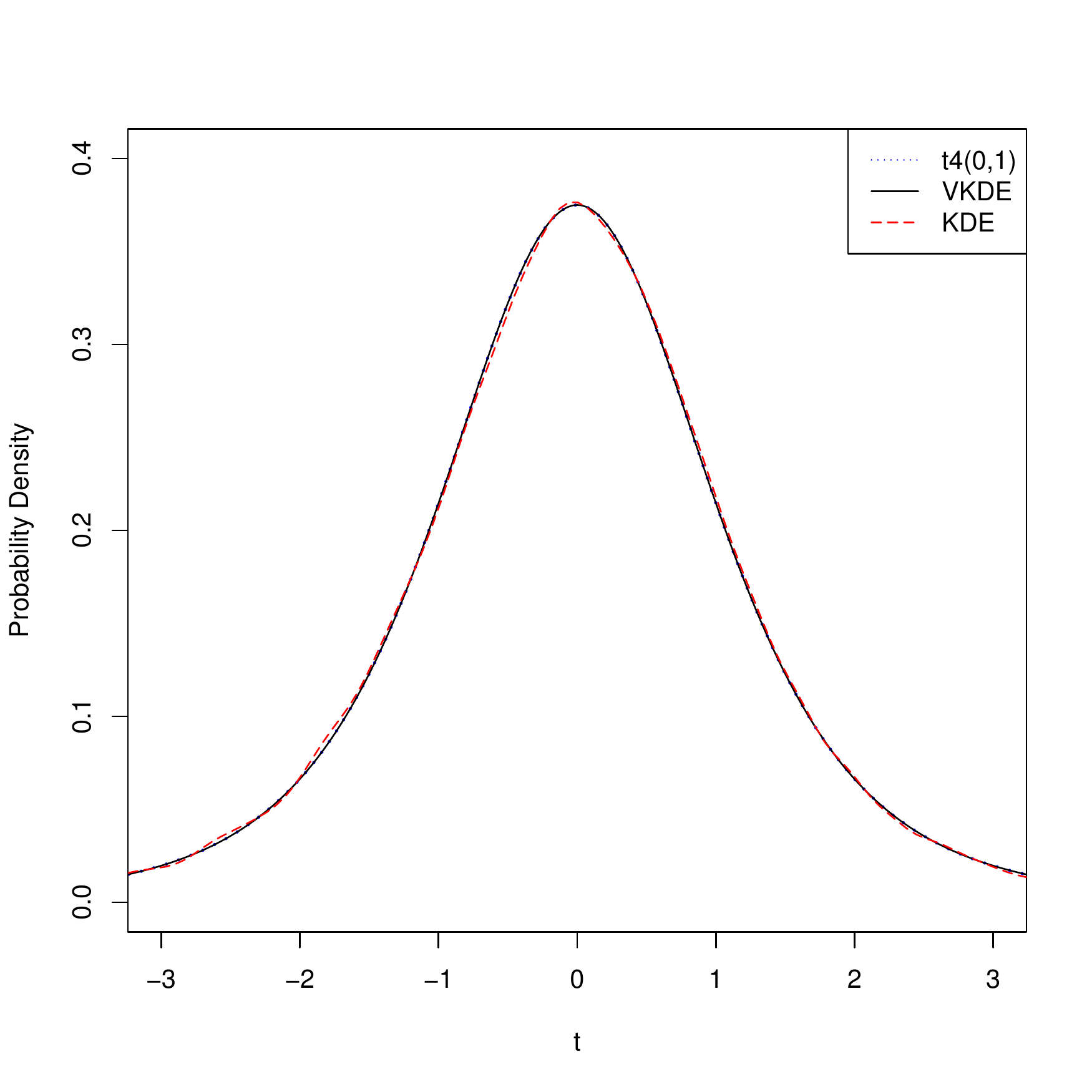}
\includegraphics[ height=2.40in, width=2.50in]{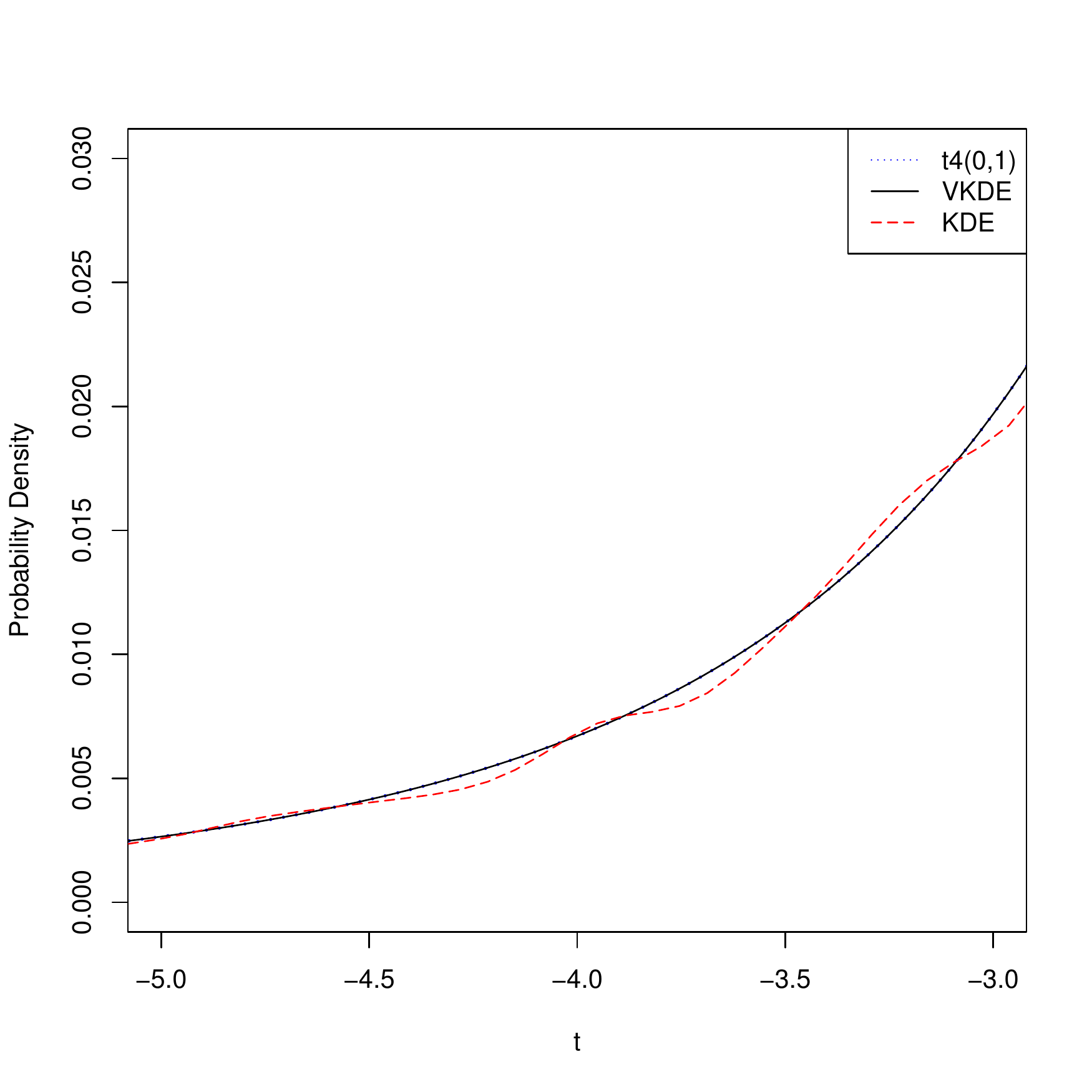}
\includegraphics[ height=2.40in, width=2.50in]{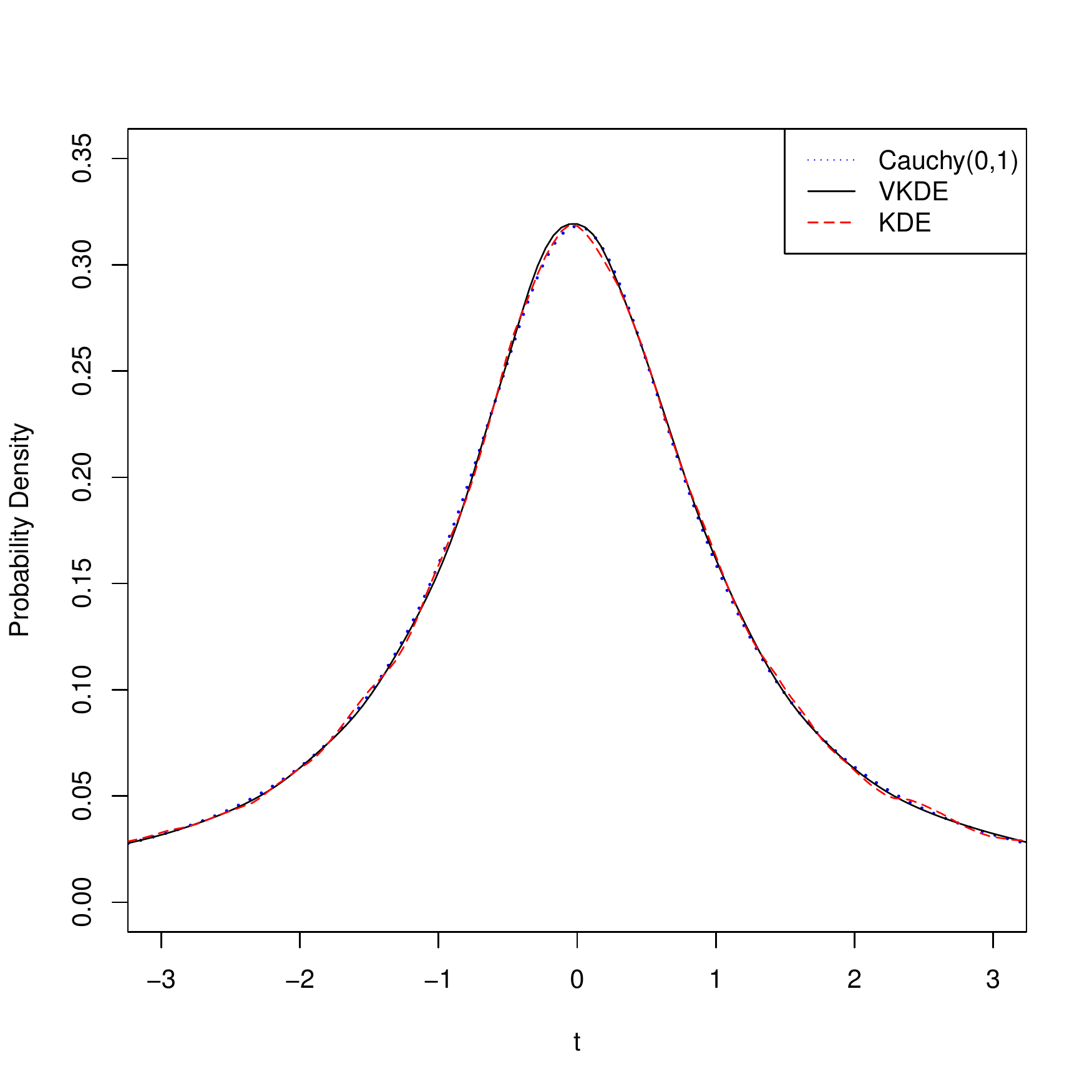}
\includegraphics[ height=2.40in, width=2.50in]{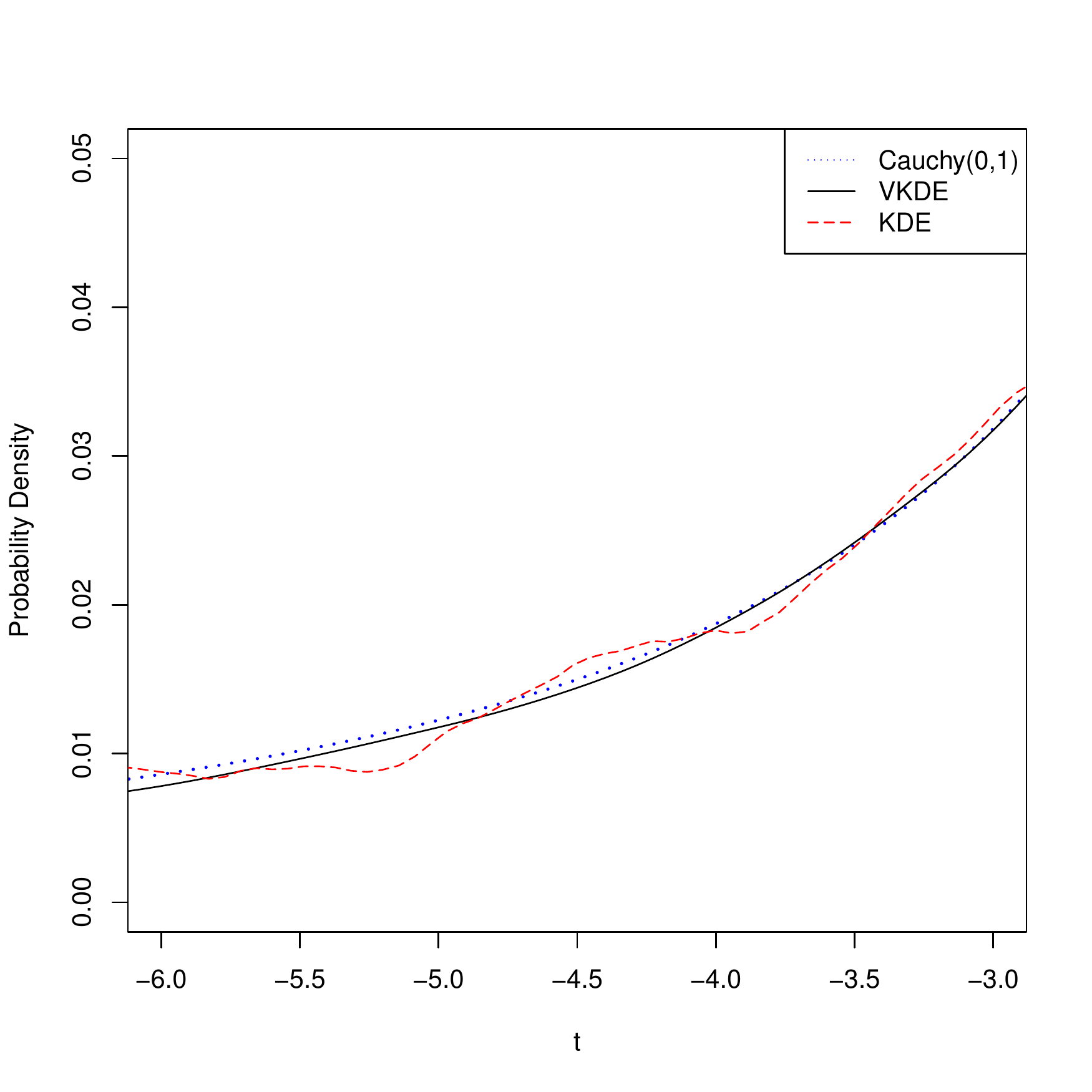}
\includegraphics[ height=2.40in, width=2.50in]{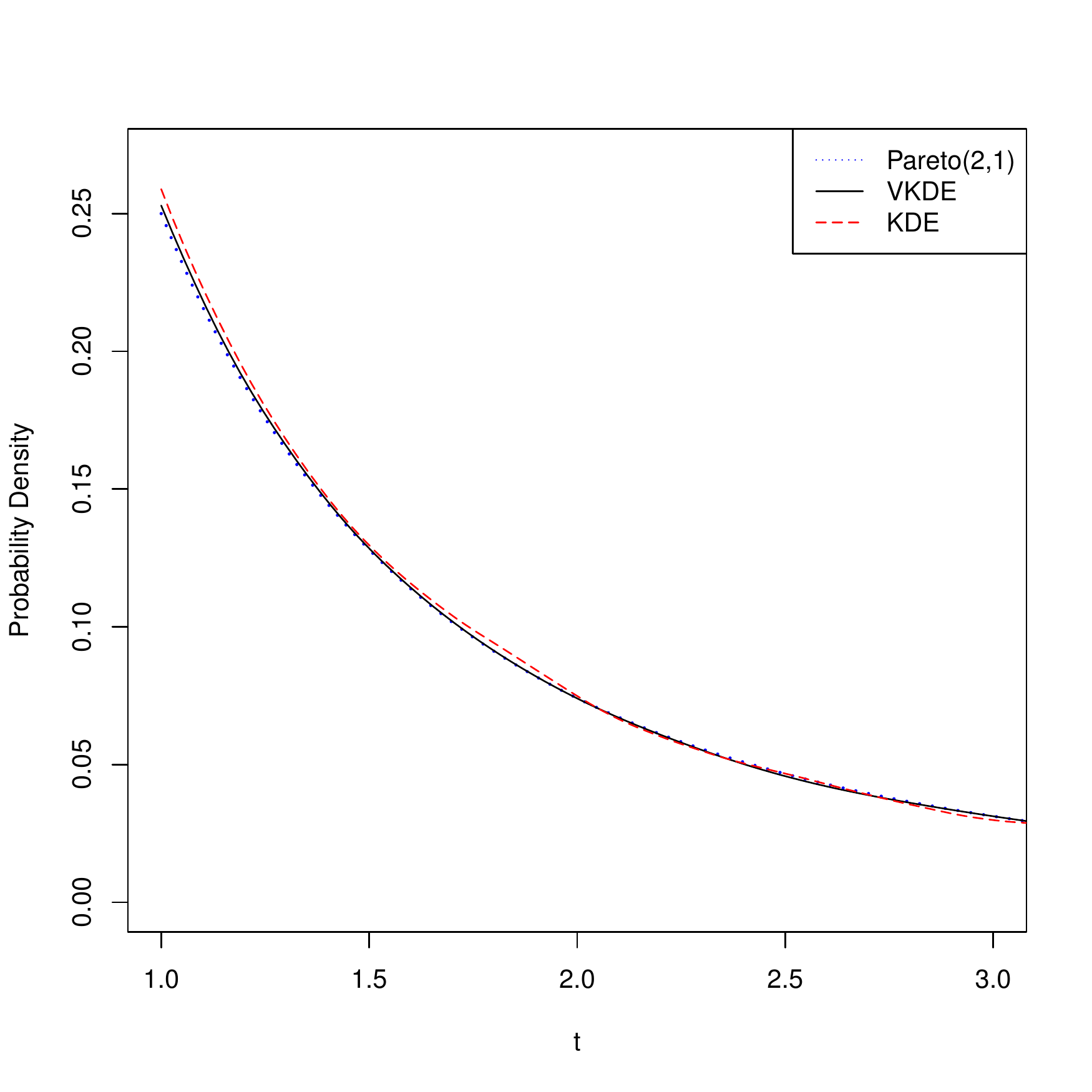}
\includegraphics[ height=2.40in, width=2.50in]{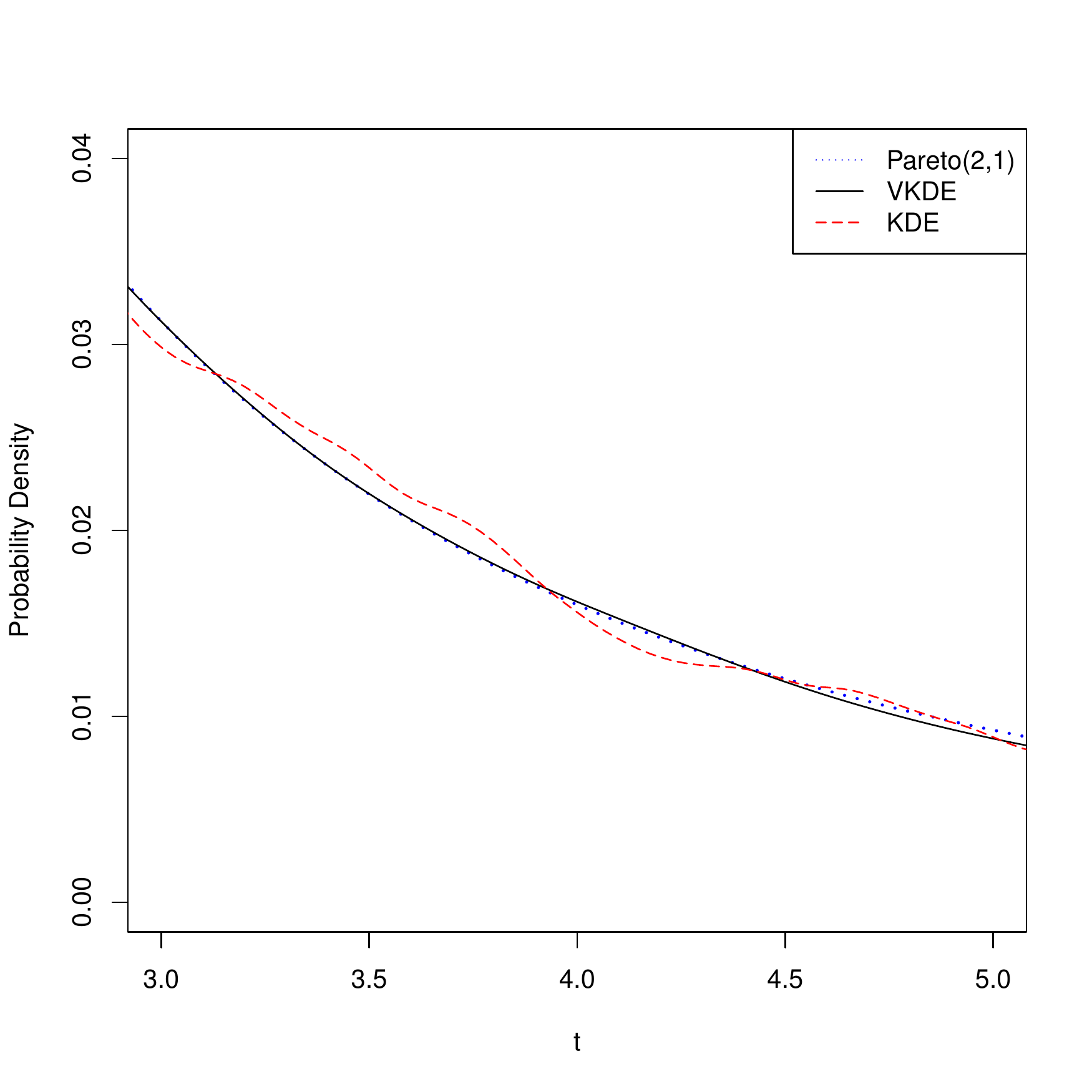}
\caption{The probability density functions of t-distribution ($t_4(0,1)$), Cauchy(0,1) and Pareto(0,1), the kernel density estimates (KDE),  and the variable kernel density estimates (VKDE) with 50,000 observations generated from t-distribution ($t_4(0,1)$), Cauchy(0,1) and Pareto(0,1) distribution. The left one shows the estimate in the main area with the mode. The right one shows the estimate in the tail area.}
\label{fig1}
\end{figure}
The simulation study in Figure \ref{fig1} shows that, for each of these three distributions,  VKDE has better performance than KDE, especially in the tail area. 

\vspace{1cm}
 
\noindent \textbf{Acknowledgement}  The authors thank the referee and the
Editor for their careful reading of the manuscript and for their insightful comments, 
which have helped to improve the quality of this paper.

\end{document}